\newtheorem{thm}{Theorem}[section]
\newtheorem{cor}[thm]{Corollary}
\newtheorem{lem}[thm]{Lemma}
\newtheorem{prop}[thm]{Proposition}
\newtheorem{rmk}[thm]{Remark}
\numberwithin{equation}{section}
\newcommand{\prm}{\prime}
\newcommand{\pa}{\partial}
\newcommand{\bl}{\textbf}
 \newcommand{\Rmnum}[1]{\expandafter\@slowromancap\romannumeral #1@}
\begin{document}

  \title{Picard-Fuchs Equations for Relative Periods and Abel-Jacobi Map for Calabi-Yau Hypersurfaces}
  \author{Si Li,\ Bong H. Lian,  \ Shing-Tung Yau}
  \date{}
  \maketitle


\begin{abstract}
We study the variation of relative cohomology for a pair consisting of a smooth projective hypersurface and an algebraic subvariety in it.  We construct an inhomogeneous Picard-Fuchs equation by applying a Picard-Fuchs operator to the holomorphic top form on a toric Calabi-Yau hypersurface, and deriving a general formula for the $d$-exact form on one side of the equation. We also derive a double residue formula, giving a purely algebraic way to compute the inhomogeneous Picard-Fuchs equations for Abel-Jacobi map, which has played an important role in recent study of D-branes \cite{D-brane Normal Function}.
Using the variation formalism, we prove that  the relative periods of toric B-branes on a toric Calabi-Yau hypersurface satisfy the enhanced GKZ-hypergeometric system proposed in physics literature \cite{Mayr}, and discuss the relations between the works \cite{D-brane Normal Function} \cite{Masoud} \cite{Mayr} in recent study of open string mirror symmetry. We also give the general solutions to the enhanced hypergeometric system.
\end{abstract}

\section{Introduction}
Mirror symmetry connects symplectic geometry of Calabi-Yau manifold
to complex geometry of its mirror manifold. In closed string theory, this has led to predictions on
counting curves on projective Calabi-Yau threefolds \cite{Candelas}\cite{BCOV}.
In open string theory, mirror symmetry has led to predictions on counting
holomorphic discs, first in the non-compact case studied in \cite{Mina
Vafa}\cite{Mina Vafa 2}, and more recently in the compact quintic example, where the
instanton sum of disc amplitude with non-trivial boundary on the
real locus of the real quintic is shown to be identical to the normalized
Abel-Jacobi map on the mirror quintic via mirror map \cite{open
mirror}\cite{D-brane Normal Function}\cite{solomon-walcher}.

In physics, the Abel-Jacobi map serves as the domain-wall tension of D-branes on the B-model,
and is obtained via reduction of the holomorphic Chern-Simons action on
curves \cite{Mina Vafa}. It is conjectured to have remarkable
integrality structure \cite{Ooguri-Vafa}. A key for
calculating the Abel-Jacobi map is through inhomogeneous
Picard-Fuchs equations \cite{D-brane Normal Function}. Let $X_z$ be a family of Calabi-Yau threefolds parameterized by variable $z$, and $\Omega_z$ be a family of nonzero holomorphic 3-forms on $X_z$. Assume that there is a family of pairs of holomorphic curves $C^+_z, C^-_z$ in $X_z$. Let $\mathcal D(\pa_z)$ be a Picard-Fuchs
operator. Then there exists a 2-form $\beta_z$ such that
\begin{eqnarray}
    \mathcal D(\pa_z)\Omega_z=-d\beta_z
\end{eqnarray}
The exact term $d\beta_z$ does not contribute when it is integrated over a closed 3-cycle $\Gamma$ in $X_z$. The so-called closed-string period $\int_\Gamma\Omega_z$ then satisfies a homogeneous Picard-Fuchs equation.
In open string theory, it is necessary to consider the integral of $\Omega_z$ over a 3-chain $\Gamma$ in $X_z$ which is not closed, but whose boundary is $C^+-C^-$.
Because of contributions from the boundary, this so-called open-string period $\int_\Gamma\Omega_z$ satisfies an inhomogeneous Picard-Fuchs equation. Solving the equation gives a precise description of the Abel-Jacobi map up to closed-string periods. To study this map, $\beta_z$ plays an essential role since it is this form that gives rise to one side of the inhomogeneous Picard-Fuchs equation:
\begin{eqnarray}
    \int_\Gamma \mathcal D(\pa_z) \Omega_z=-\int_{\partial\Gamma}\beta_z.
\end{eqnarray}
The inhomogeneous term on the right side turns out also to encodes important information for predicting the number of holomorphic disks on a mirror Calabi-Yau manifold.

There have been several proposals for constructing the inhomogeneous Picard-Fuchs equation and its solutions. In the case of 1-moduli family \cite{D-brane Normal
Function}, it was done by first computing $\beta_z$ using the Griffith-Dwork reduction
procedure, and then by doing an explicit (but delicate) local analytic calculation of appropriate
boundary integrals. Based on the notion of  off-shell mirror symmetry, two other proposals \cite{Masoud}\cite{Mayr} have been put forth. Roughly speaking, their setup begins with a family of divisors $Y_{z,u}$ which deforms in $X_z$ under an additional parameter $u$. For each relative homology class $ \Gamma\in H_3(X_z, Y_{z,u})$, one considers the integral
\begin{eqnarray}
    \int_{\Gamma}\Omega_z
\end{eqnarray}
which is called a relative period for B-brane. It is proposed that
the open-string periods above be recovered as a certain critical value of the relative period, regarded as a function of $u$. To calculate the relative periods, \cite{Masoud} proposed a procedure similar to the Griffith-Dwork reduction. In \cite{Mayr}, an enlarged polytope is  proposed to encode both the
geometry of the Calabi-Yau $X_z$ and the B-brane geometry. This gives rise to a GKZ hypergeometric system for the relative periods, and a special solution at a critical point in $u$ then leads to a solution to the original inhomogeneous Picard-Fuchs equation.

Our goal in this paper is to further develop the mathematical structures underlying inhomogeneous Picard-Fuchs equations and the Abel-Jacobi map, and to clarify the relationships between the three approaches mentioned above. Here is an outline.
We begin, in section 2, with a description of a residue formalism for relative cohomology of a family of pairs $(X_z,Y_z)$, including a number of variational formulas on the local system $H^n(X_z,Y_z)$. In section 3, we derive a general formula for the exact form (the $\beta$-term) appearing in the inhomogeneous Picard-Fuchs equation for toric Calabi-Yau hypersurfaces, generalizing GKZ-type differential equation to the level of differential forms instead of cohomology classes. This gives a much more uniform approach to computing the $\beta$-term than the Griffith-Dwork reduction. 
In section 4, we prove a purely algebraic a double residue formula for the inhomogeneous term of the Picard-Fuchs equation that governs the Abel-Jacobi map. This uniform approach also allows us to bypass the delicate local analytical calculation of boundary integrals in a previous approach \cite{D-brane Normal Function}\cite{one-par open mirror}. 
In section 5, using the residue formalism in section 2, we give a simple interpretation of the relative version of the Griffith-Dwork
reduction used in \cite{Masoud}. In particular, this gives a mathematical justification for the appearance of log divisor, and elucidates the relationship between relative periods and the Abel-Jacobi map. 
We also give a uniform description for the enhanced polytope method for describing toric B-brane
geometry in a general toric Calabi-Yau hypersurface, and show that relative periods satisfy the corresponding enhanced GKZ system.  Finally, we give a general formula, modeled on the closed string case \cite{HKSY}\cite{Hosono-Lian}, for solution to the enhanced GKZ system.

{\it Acknowledgement.} S.L. would like to thank J. Walcher for many stimulating discussions, and thank M.Soroush for answering many questions on his paper. After the completion of a preliminary draft of our paper, three other papers \cite{AB}\cite{AHJMMS}\cite{GHKK} with some overlap with ours have since been posted on the arXiv.

\section{Variation of Relative Cohomology}
\subsection*{Local System of Relative Cohomology and Gauss-Manin Connection}
Let $\pi: \mathcal X\to S$ be a smooth family of $n$-dimensional
projective varieties, and $\mathcal Y\to S$ be a family of smooth
subvariety $\mathcal Y\subset \mathcal X$. Let $s\in S$ be a closed point, and denote by $X_s, Y_s$
the corresponding fiber over $s$. Consider the family of relative cohomology
class
$$
    H^n(X_s,Y_s)
$$
given by the cohomology of the complex of pairs:
$$
    \Gamma(\Omega^n(X_s))\oplus \Gamma(\Omega^{n-1}(Y_s))
$$
with the differential
\begin{eqnarray}
    d(\alpha,\beta)=(d\alpha, \alpha|_{Y_s}-d\beta)
\end{eqnarray}
Here $\Omega^n(X_s)$ and $\Omega^{n-1}(Y_s)$ are sheaves of De Rham
differential $n$-forms on $X_s$ and $(n-1)$-form on $Y_s$, and $\Gamma$ is the smooth global section.
 Therefore an element of $H^n(X_s, Y_s)$ is represented by a
differential $n$-form on $X_s$ whose restriction to $Y_s$ is specified by an exact
form.

\begin{lem}
$H^n(X_s,Y_s)$ forms a local system on $S$.
\end{lem}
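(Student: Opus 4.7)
The plan is to show that the assignment $s \mapsto H^n(X_s, Y_s)$ is locally constant on $S$, which is the defining property of a local system. The argument is a relative version of Ehresmann's fibration theorem applied to the pair $(\mathcal{X}, \mathcal{Y}) \to S$, combined with the homotopy invariance of the relative De Rham complex defined in the excerpt.

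First I would reduce to a local statement by fixing $s_0 \in S$, choosing a chart, and assuming $S$ is a small ball around $0 = s_0$. The main step is to show that the pair $(\mathcal{X}, \mathcal{Y})$ is $C^{\infty}$-trivial over this ball, i.e.\ there exists a diffeomorphism $\Phi: X_0 \times S \to \mathcal{X}$ commuting with the projection to $S$ and carrying $Y_0 \times S$ onto $\mathcal{Y}$. To construct $\Phi$, for each coordinate vector field $\pa_i$ on $S$, I would produce a smooth vector field $\widetilde{\pa_i}$ on $\mathcal{X}$ that projects to $\pa_i$ under $d\pi$ and is tangent to $\mathcal{Y}$ along $\mathcal{Y}$. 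The tangency is the only nontrivial point: choose a tubular neighborhood of $\mathcal{Y}$ in $\mathcal{X}$, construct a lift tangent to $\mathcal{Y}$ near $\mathcal{Y}$ (possible because $\pi|_{\mathcal{Y}}: \mathcal{Y} \to S$ is itself a submersion), and patch with an arbitrary smooth lift away from $\mathcal{Y}$ via a partition of unity. Integrating these flows, using the properness of $\pi$ coming from projectivity of the fibers to guarantee that the flows exist for long enough time, produces the required trivialization $\Phi$.

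Once $\Phi$ and its restriction $Y_0 \times S \to \mathcal{Y}$ are in hand, pullback along $\Phi_s : (X_0, Y_0) \to (X_s, Y_s)$ induces an isomorphism of cochain complexes $\Gamma(\Omega^n(X_s)) \oplus \Gamma(\Omega^{n-1}(Y_s)) \to \Gamma(\Omega^n(X_0)) \oplus \Gamma(\Omega^{n-1}(Y_0))$ that intertwines the differential $d(\alpha,\beta) = (d\alpha, \alpha|_{Y} - d\beta)$, hence an isomorphism $H^n(X_s, Y_s) \cong H^n(X_0, Y_0)$ on cohomology for every $s \in S$. Since any two such trivializations differ by a smooth isotopy of pairs, and isotopic diffeomorphisms of pairs induce the same map on relative De Rham cohomology, these isomorphisms are canonical up to homotopy; the transition maps between local trivializations are therefore locally constant, yielding the local system structure.

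The hard part of the argument is the construction of the $\mathcal{Y}$-preserving lift of coordinate vector fields on $S$; everything else is standard Ehresmann-type flow integration together with the homotopy invariance of relative De Rham cohomology of a pair.
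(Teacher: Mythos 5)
Your proof is correct and follows the same route the paper sketches: the paper's proof is a one-sentence reference to choosing a local trivialization of $\mathcal{X} \to S$ that simultaneously trivializes $\mathcal{Y} \to S$, and you have simply unfolded that sentence into the full Ehresmann-type argument, including the partition-of-unity construction of a $\mathcal{Y}$-tangent lift of vector fields and the homotopy invariance of the relative De Rham complex.
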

\begin{proof}
The proof is similar to the case without $\mathcal Y$ by choosing a
local trivialization of $\mathcal X\to S$ which also trivializes
$\mathcal Y\to S$. See e.g.\cite{Voisin}.
\end{proof}

We denote this local system by $\mathcal H^n_{(\mathcal X, \mathcal Y)}$, and let $\nabla^{GM}$ be the Gauss-Manin connection. \\
There's a well-defined natural pairing
\begin{eqnarray}
\begin{array}{ccccc}
    H_n(X_s, Y_s) &\otimes & H^n(X_s, Y_s) &\to& \mathbb C\\
    \Gamma & \otimes & (\alpha, \beta) &\mapsto& <\Gamma, (\alpha, \beta)>\equiv \int_\Gamma \alpha- \int_{\pa \Gamma} \beta.
\end{array}
\end{eqnarray}
\\
Given a family $(\alpha_s, \beta_s)\in H^n(X_s, Y_s)$ varying
smoothly, which gives a smooth section of $\mathcal H^n_{(\mathcal X,
\mathcal Y)}$ denoted by $[(\alpha_s, \beta_s)]$, and $\Gamma_s\in
H_n(X_s, Y_s)$  a smooth family of relative cycles, we get a function on
$S$ given by the pairing
$$
    <\Gamma_s, (\alpha_s, \beta_s)>=\int_{\Gamma_s} \alpha_s- \int_{\pa \Gamma_s} \beta_s
$$
Let $v$ be a vector field on $S$. We consider the variation
$$
    \mathcal L_v <\Gamma_s, (\alpha_s, \beta_s)>
$$
where $\mathcal L_v$ is the Lie derivative with respect to $v$. Suppose we have a lifting $\tilde \alpha, \tilde \beta$, which are differential forms on $\mathcal X, \mathcal Y$ respectively, such that
$$
    \tilde \alpha|_{X_s}=\alpha_s, \ \tilde \beta |_{Y_s}=\beta_s
$$
and that $\Gamma_s$ moves smoothly to form a cycle $\tilde \Gamma$ on $\mathcal X$:
$$
    \Gamma_s=\tilde \Gamma \cap X_s, \ \ \pa \Gamma\subset \mathcal Y.
$$
Let $\tilde v_{\mathcal X}$ be a lifting of $v$ on $\mathcal X$,
$\tilde v_{\mathcal Y}$ be a lifting of $v$ on $\mathcal Y$.

\begin{prop}[Variation Formula]
    \begin{eqnarray}\label{variation formula}
        \mathcal L_v <\Gamma_s, (\alpha_s, \beta_s)>=<\Gamma_s, (\iota_{\tilde v_{\mathcal X}}\lrcorner d\tilde \alpha, \iota_{\tilde v_{\mathcal Y}}
        \lrcorner(d\tilde \beta-\tilde \alpha) )>
    \end{eqnarray}
    where $\iota_{\tilde v_{\mathcal X}}$ is the contraction with $\tilde v_{\mathcal
    X}$, and similarly for $\iota_{\tilde v_{\mathcal Y}}$.
\end{prop}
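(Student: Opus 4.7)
The plan is to apply Cartan's magic formula $\mathcal L = d\iota + \iota d$ separately to the two integrals in the pairing $<\Gamma_s,(\alpha_s,\beta_s)> = \int_{\Gamma_s}\alpha_s - \int_{\partial\Gamma_s}\beta_s$, use Stokes on the resulting exact pieces, and then reconcile the auxiliary vector fields required by the flow arguments with the prescribed liftings $\tilde v_{\mathcal X}$ and $\tilde v_{\mathcal Y}$. First I would rewrite $\int_{\Gamma_s}\alpha_s = \int_{\Gamma_s}\tilde\alpha$ and $\int_{\partial\Gamma_s}\beta_s = \int_{\partial\Gamma_s}\tilde\beta$ using the extensions $\tilde\alpha,\tilde\beta$ to $\mathcal X,\mathcal Y$.

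To differentiate the integral of a fixed form on $\mathcal X$ along the moving slice $\Gamma_s = \tilde\Gamma\cap X_s$, I would pick a first auxiliary lift $w$ of $v$ to $\mathcal X$ that is tangent to $\tilde\Gamma$ and whose restriction to $\partial\tilde\Gamma$ is tangent to $\partial\tilde\Gamma$, and a second lift $w'$ of $v$ to $\mathcal Y$ tangent to $\partial\tilde\Gamma$. Then the flow of $w$ carries $\Gamma_s$ to $\Gamma_{s+tv}$ and $\partial\Gamma_s$ to $\partial\Gamma_{s+tv}$, so by Cartan and Stokes
\begin{equation*}
\mathcal L_v \int_{\Gamma_s}\tilde\alpha = \int_{\Gamma_s}\mathcal L_w\tilde\alpha = \int_{\Gamma_s}\iota_w d\tilde\alpha + \int_{\partial\Gamma_s}\iota_w\tilde\alpha,
\end{equation*}
while the same argument on $\partial\tilde\Gamma$, together with $\partial^2=0$, gives $\mathcal L_v\int_{\partial\Gamma_s}\tilde\beta = \int_{\partial\Gamma_s}\iota_{w'}d\tilde\beta$.

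The final step is to trade $w,w'$ for the prescribed $\tilde v_{\mathcal X},\tilde v_{\mathcal Y}$. This uses three top-degree vanishings coming from the dimensions of $X_s$ and $Y_s$: (i) $d\alpha_s = 0$ on $X_s$, since $\alpha_s$ is an $n$-form on the $n$-manifold $X_s$, killing the correction $\int_{\Gamma_s}\iota_{w-\tilde v_{\mathcal X}}d\tilde\alpha$; (ii) $d\beta_s=0$ on $Y_s$ for the same reason, killing the analogous correction for $w'$; and (iii) $\tilde\alpha|_{Y_s}=\alpha_s|_{Y_s}=0$, being an $n$-form on the $(n-1)$-dimensional $Y_s$, so via the identity $(\iota_X\omega)|_{Y_s}=\iota_X(\omega|_{Y_s})$ for $X$ tangent to $Y_s$ one obtains $\int_{\partial\Gamma_s}\iota_{w-\tilde v_{\mathcal Y}}\tilde\alpha = 0$, which lets me swap $w$ for $\tilde v_{\mathcal Y}$ in the remaining boundary integral. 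Subtracting the two Lie derivatives then yields
\begin{equation*}
\mathcal L_v<\Gamma_s,(\alpha_s,\beta_s)> = \int_{\Gamma_s}\iota_{\tilde v_{\mathcal X}}d\tilde\alpha - \int_{\partial\Gamma_s}\iota_{\tilde v_{\mathcal Y}}(d\tilde\beta - \tilde\alpha),
\end{equation*}
which is the asserted pairing. The main obstacle is the bookkeeping in step (iii): the auxiliary $w|_{\partial\tilde\Gamma}$ is tangent to $\mathcal Y$ but need not equal $\tilde v_{\mathcal Y}$, and the argument that the correction vanishes is the decisive place where the codimension-one setup of $Y_s \subset X_s$ and the compatibility of ambient with intrinsic contraction really enter, making the right-hand side intrinsic to the prescribed liftings.
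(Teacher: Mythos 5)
Your proof is correct, and the route is an equivalent but more carefully packaged version of the paper's. The paper applies Stokes' theorem to the annular chain $\tilde\Gamma_{\sigma(t)}$ swept out between parameter values $0$ and $t$ and then differentiates at $t=0$; that is exactly the integrated form of the Cartan identity $\mathcal L_w = d\iota_w + \iota_w d$ that you invoke termwise, so the underlying calculus is the same. Where you genuinely improve on the paper's exposition is in the lift-independence bookkeeping: the paper passes silently from the velocity field of the moving chain (which is necessarily tangent to $\tilde\Gamma$, resp.\ $\partial\tilde\Gamma$) to the arbitrary lifts $\tilde v_{\mathcal X}, \tilde v_{\mathcal Y}$ appearing in the statement, whereas your step (iii) explains why the answer does not depend on that choice, via the three dimension-count vanishings $d\alpha_s=0$ on $X_s$, $d\beta_s=0$ on $Y_s$, and $\alpha_s|_{Y_s}=0$. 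Since this independence is precisely what makes the Gauss--Manin formula in the paper well defined, making it explicit is a worthwhile refinement rather than a detour.
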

\begin{proof}
    We fix a point $s_0\in S$, and let $\sigma(t)$ be a local
    integral curve of $v$ such that $\sigma(0)=s_0$. Let
    $\tilde \Gamma_{\sigma(t)}$ denote the one-dimensional family of cycles over
    $\sigma(s), \ \ 0\leq s\leq t$, and we denote by $\Gamma_{t}$ the cycle over
    the point $\sigma(t)$. Also let $\widetilde{\pa\Gamma}_{\sigma(t)}$ be the
    family of boundary cycle $\pa\Gamma_s$ over $\sigma(s), 0\leq s\leq t$. Then we
    have
    \begin{eqnarray}
    \pa \left(\tilde
    \Gamma_{\sigma(t)}\right)=\Gamma_t-\Gamma_0-\widetilde{\pa\Gamma}_{\sigma(t)}
    \end{eqnarray}
    therefore
    \begin{eqnarray*}
        \int_{\tilde\Gamma_{\sigma(t)}} d\tilde \alpha=\int_{\Gamma_t}\tilde
    \alpha-\int_{\Gamma_0}\tilde
    \alpha-\int_{\widetilde{\pa\Gamma}_{\sigma(t)}}\tilde\alpha
    \end{eqnarray*}
    Similarly
    \begin{eqnarray*}
        \int_{\widetilde{\pa\Gamma}} d\tilde\beta = \int_{\pa \Gamma_t}\tilde\beta-\int_{\pa\Gamma_0}\tilde\beta
    \end{eqnarray*}
    Taking the derivative with respect to t, we get
    \begin{eqnarray*}
        {\pa\over \pa t} \left(\int_{\Gamma_t}\tilde \alpha -\int_{\pa \Gamma_t}\tilde\beta \right)=\int_{\Gamma_t}\iota_{\tilde v_{\mathcal X}}\lrcorner d\tilde \alpha
        +\int_{\pa\Gamma_t}\iota_{\tilde v_{\mathcal Y}}\lrcorner \left(\tilde \alpha-d\tilde\beta\right)
    \end{eqnarray*}
The proposition follows.
\end{proof}

Note that $(\iota_{\tilde v_{\mathcal X}}\lrcorner d\tilde \alpha,
\iota_{\tilde v_{\mathcal Y}}
        \lrcorner(d\tilde \beta-\tilde \alpha) )$ is nothing but the
        Gauss-Manin connection
 \begin{eqnarray}\label{Gauss Manin}
        \nabla^{GM}_v [(\alpha_s, \beta_s)]=\left[\left((\iota_{\tilde v_{\mathcal X}} \lrcorner d\tilde \alpha )|_{X_s}, (\iota_{\tilde v_{\mathcal Y}}\lrcorner
        (d\tilde \beta-\tilde \alpha) )|_{Y_s}\right)\right]
 \end{eqnarray}
and it is straightforward to check using the variation formula that the right side of (\ref{Gauss Manin}) is
independent of the choice of $\tilde \alpha, \tilde \beta, \tilde
v_{\mathcal X}, \tilde v_{\mathcal Y}$, and that the connection is flat. The following corollary also follows from (\ref{Gauss Manin}).

\begin{cor}
\begin{eqnarray}
 \mathcal L_v <\Gamma_s, (\alpha_s, \beta_s)>=<\Gamma_s, \nabla^{GM}_v [(\alpha_s, \beta_s)]>
\end{eqnarray}
\end{cor}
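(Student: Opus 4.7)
The plan is to read the corollary as a direct reformulation of the Variation Formula in the language of the Gauss-Manin connection defined by (\ref{Gauss Manin}). The Variation Formula already computes
\begin{equation*}
\mathcal L_v <\Gamma_s, (\alpha_s, \beta_s)> \,=\, <\Gamma_s, (\iota_{\tilde v_{\mathcal X}}\lrcorner d\tilde \alpha,\ \iota_{\tilde v_{\mathcal Y}}\lrcorner(d\tilde \beta - \tilde \alpha))>,
\end{equation*}
and by the very definition (\ref{Gauss Manin}) of $\nabla^{GM}_v$, the pair of forms on the right, after restriction to the fiber $(X_s, Y_s)$, represents $\nabla^{GM}_v [(\alpha_s, \beta_s)]$. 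Since $\Gamma_s$ is a chain in $X_s$ and $\pa \Gamma_s$ is a chain in $Y_s$, the pairing $<\Gamma_s, \cdot>$ only depends on these restrictions. So the identity of the corollary reduces to the statement that the pairing descends from pairs of forms to classes in $H^n(X_s, Y_s)$.

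For this descent, I would check: if $(\alpha, \beta) = d(\gamma, \delta) = (d\gamma,\ \gamma|_{Y_s} - d\delta)$ is exact in the relative complex, then
\begin{equation*}
<\Gamma_s, (\alpha,\beta)> \,=\, \int_{\Gamma_s}d\gamma - \int_{\pa \Gamma_s}(\gamma|_{Y_s} - d\delta) \,=\, \int_{\pa \Gamma_s}\gamma - \int_{\pa \Gamma_s}\gamma + 0 \,=\, 0,
\end{equation*}
by Stokes' theorem applied to $\Gamma_s$, together with $\pa \pa \Gamma_s = 0$ on the $d\delta$ term. Hence $<\Gamma_s, \cdot>$ factors through $H^n(X_s, Y_s)$, and the two sides of the corollary coincide.

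The one conceptual hurdle sitting one level deeper is that, for the right-hand side of the corollary to make sense at all, the class $\nabla^{GM}_v [(\alpha_s, \beta_s)]$ defined by (\ref{Gauss Manin}) must be independent of the liftings $\tilde\alpha, \tilde\beta, \tilde v_{\mathcal X}, \tilde v_{\mathcal Y}$. This is the main potential obstacle, but it is already forced by the Variation Formula itself: the left-hand side $\mathcal L_v <\Gamma_s, (\alpha_s, \beta_s)>$ depends only on the smooth section $[(\alpha_s, \beta_s)]$ and on $v$, so testing against a basis of $H_n(X_s, Y_s)$ and invoking nondegeneracy of the pairing yields the well-definedness of the class on the right. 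Verifying that this pair is in fact a relative cocycle (i.e.\ $d$-closed in the complex of pairs) uses $d\alpha_s = 0$ and $\alpha_s|_{Y_s} = d\beta_s$ together with Cartan's formula $\mathcal L_v = d\iota_v + \iota_v d$, and is the routine but necessary consistency check standing between the Variation Formula and its cohomological restatement as the corollary.
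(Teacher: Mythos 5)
Your proposal is correct and follows exactly the route the paper intends: the corollary is a restatement of the Variation Formula (Proposition 2.2) once the pair $(\iota_{\tilde v_{\mathcal X}}\lrcorner d\tilde\alpha,\ \iota_{\tilde v_{\mathcal Y}}\lrcorner(d\tilde\beta-\tilde\alpha))$ is recognized as a representative of $\nabla^{GM}_v[(\alpha_s,\beta_s)]$ via (\ref{Gauss Manin}). You have merely filled in the routine checks (Stokes for descent of the pairing to $H^n(X_s,Y_s)$, well-definedness of the class on the right) that the paper leaves implicit with the remark that they are ``straightforward to check using the variation formula.''
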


\subsection*{Residue Formalism for Relative Cohomology}
In this section, we assume that $X_z$ moves as a family of
hypersurfaces in a fixed $n+1$-dim ambient projective space $M$ with defining
equation $P_z=0$. Here $P_z\in H^0(M,[D])$ for a fixed divisor class
$[D]$, and $z$ is holomorphic coordinate on $S$ parametrizing the family. Let
$$
    \omega_z\in H^0(M, K_M(X_z))
$$
be a rational (n+1,0)-form on $M$ with pole of order one along
$X_z$, then we get a famliy of holomorphic (n,0)-form on $X_z$ given
by
$$
    Res_{X_z} \omega_z \in H^{0}(X_z, K_{X_z})
$$
and also a family of relative cohomology classes
$$
    (Res_{X_z} \omega_z,0) \in H^n(X_z, Y_z)
$$
Here our convention for $Res$ is that if $X_z$ is locally given by $w=0$, and
$\omega_z={dw\over w}\wedge \phi$, where $\phi$ is locally a smooth form, then  $Res_{X_z}\omega_z=\phi|_{X_z}$.
Note that the map $Res_{X_z}: H^0(M, K_M(X_z))\to H^{n,0}(X_z)$ is
at the level of forms, not only as cohomology classes since the
order of pole is one. While in the residue formalism of ordinary cohomology, we can ignore
exact forms to reduce the order of the pole \cite{Rational Integral}, it is important to keep track of the order of the
pole in considering periods of relative cohomology because of the boundary term.  \\

We choose a fixed open cover $\{U_\alpha\}$ of $M$ and a partition of unity $\{\rho_\alpha\}$ subordinate to it. Let $P_{z,\alpha}=0$ be the defining equation of $X_s$ on $U_\alpha$. Then we can write
\begin{eqnarray}
    \omega_z=\sum_{\alpha} {d_M P_{z, \alpha}\over P_{z, \alpha}}\wedge \phi_{z,\alpha}
\end{eqnarray}
where $\phi_{z,\alpha}$ is a smooth $(n,0)$-form with $ \overline{\ supp(\phi_{z,\alpha})}\subset U_\alpha$.
On the trivial family $M\times S$, we will use $d_M$ to denote the
differential along $M$ only and use $d$ to denote the differential
on the total space. Let
\begin{eqnarray}
    \phi_z=\sum_\alpha \phi_{z, \alpha}
\end{eqnarray}
Then $\phi_z$ is a smooth form on $M\times S$ such that
$$
    \phi_z|_{X_z}=Res_{X_z}  {\omega_z}
$$
Consider the variation
\begin{eqnarray*}
    {\pa\over \pa z} \omega_z    &=& d_M\left( \sum_\alpha  \pa_z \log(P_{z,\alpha})  \phi_{z,\alpha}\right)+\sum_{\alpha} \left({d_M P_{z, \alpha}\over P_{z, \alpha}}\wedge \pa_z\phi_{z,\alpha}-{\pa_z P_{z,\alpha} \over P_{z,\alpha}}d_M \phi_{z,\alpha}\right)
\end{eqnarray*}
Let
$$
    \tilde{\pa_{z}}_{\mathcal X}={\pa\over \pa z}+ n_{z,\mathcal X}
$$
be a lifting of ${\pa\over \pa z}$ to $\mathcal X$, where $n_{z,\mathcal X}\in \Gamma(T_M|_{X_z})$ is along the fiber, which is a normal vector field corresponding to the deformation of $X_z$ in $M$ with respect to $z$. Then in each
$U_\alpha$, we have
\begin{eqnarray}
    \left(\iota_{n_{X_z}}\lrcorner d_M P_{z,\alpha}\right)|_{X_z}=-{\pa_z} P_{z,
    \alpha}|_{X_z}
\end{eqnarray}
It follows easily that
\begin{eqnarray}
    \iota_{\tilde{\pa_{z}}_{\mathcal X}}\lrcorner d \phi_z |_{X_z}=Res_{X_z} \left( \pa_z\omega_z - d_M \left( \pa_z \log(P_{z})  \phi_{z}\right) \right)
\end{eqnarray}
Note that since the transition function of $[D]$ is independent of $z$, $\pa_z
\log(P_{z})$ is globally well-defined. In general, $\pa_z \omega_z$ will
have a pole of order two along $X_z$, but the substraction
of $d_M \left( \pa_z \log(P_{z})  \phi_{z}\right)$ makes it logarithmic along $X_z$, hence the residue above is
well-defined.\\

Next we choose arbitrary lifting of ${\pa\over \pa z}$ to $\mathcal
Y$, and write it as
$$
    \tilde \pa_{z,\mathcal Y}={\pa\over \pa z}+n_{z,\mathcal Y}
$$
where $n_{z,\mathcal X}\in \Gamma(T_M|_{Y_z})$ is along the fiber, which is a normal vector field corresponding to the deformation of $Y_z$ in $M$ with respect to $z$. Then the variation formula implies that

\begin{prop}[Residue Variation Formula]
\begin{eqnarray}\label{variation res}
    \nabla^{GM}_{\pa_z} \left( Res_{X_z} {\omega_z},0 \right)=\left(Res_{X_z} \left(\pa_z{\omega_z} - d_M \left( \pa_z \log(P_{z})  \phi_{z}\right) \right), - \iota_{n_{z,\mathcal Y}}\lrcorner \phi_z\right)
\end{eqnarray}
\end{prop}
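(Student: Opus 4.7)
The plan is to apply the general Gauss-Manin formula (\ref{Gauss Manin}) to the relative class $(\alpha_s,\beta_s)=(Res_{X_z}\omega_z,0)$ with a carefully chosen pair of lifts, then identify each component of the resulting pair with the right-hand side of (\ref{variation res}).

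First I would choose the lifts. Since $\beta_s=0$, the natural choice is $\tilde\beta\equiv 0$ on $\mathcal Y$. For $\tilde\alpha$, I would take the smooth $(n,0)$-form $\phi_z$ on $M\times S$ constructed from the partition of unity, which by construction satisfies $\phi_z|_{X_z}=Res_{X_z}\omega_z$ and hence is a bona-fide global smooth lift of $\alpha_s$ to the total space $\mathcal X$. Crucially, $\phi_z$ has no $dz$-component: it is an $M$-form depending smoothly on the parameter $z$.

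With these lifts, formula (\ref{Gauss Manin}) produces the pair
\begin{eqnarray*}
    \nabla^{GM}_{\pa_z}(Res_{X_z}\omega_z,0)
    =\Bigl(\bigl(\iota_{\tilde\pa_{z,\mathcal X}}\lrcorner d\phi_z\bigr)|_{X_z},\ \bigl(\iota_{\tilde\pa_{z,\mathcal Y}}\lrcorner(-\phi_z)\bigr)|_{Y_z}\Bigr).
\end{eqnarray*}
For the first component, the identification with $Res_{X_z}\bigl(\pa_z\omega_z-d_M(\pa_z\log(P_z)\phi_z)\bigr)$ is exactly the computation performed in the paragraphs preceding the statement: one expands $\pa_z\omega_z$ using the partition of unity, recognises the exact term $d_M(\sum_\alpha\pa_z\log(P_{z,\alpha})\phi_{z,\alpha})$, and uses the relation $(\iota_{n_{z,\mathcal X}}\lrcorner d_M P_{z,\alpha})|_{X_z}=-\pa_z P_{z,\alpha}|_{X_z}$ together with $\phi_z|_{X_z}=Res_{X_z}\omega_z$. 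For the second component, since $\tilde\pa_{z,\mathcal Y}=\pa/\pa z+n_{z,\mathcal Y}$ and $\phi_z$ carries no $dz$-component, one has $\iota_{\pa/\pa z}\phi_z=0$, so the contraction collapses to $-\iota_{n_{z,\mathcal Y}}\lrcorner \phi_z$, which we then restrict to $Y_z$.

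The main thing to verify is that the expression $\pa_z\omega_z-d_M(\pa_z\log(P_z)\phi_z)$ has only a first-order pole along $X_z$, so that its residue is well-defined at the level of forms and not merely as a cohomology class. This is the whole point of subtracting the exact term: writing $\omega_z=\tfrac{d_M P_{z,\alpha}}{P_{z,\alpha}}\wedge\phi_{z,\alpha}$ on $U_\alpha$, the naive derivative $\pa_z\omega_z$ develops a $1/P^2$ pole through $\pa_z(1/P_{z,\alpha})$, and a direct Leibniz computation shows this pole is canceled by the $d_M$-derivative of $\pa_z\log(P_z)\,\phi_z$. A secondary check, automatic from the general theory of the Gauss-Manin connection established above, is that neither the right-hand side of (\ref{variation res}) nor the intermediate lifts depend on the choice of partition of unity or of normal vector fields $n_{z,\mathcal X}$, $n_{z,\mathcal Y}$; the statement is intrinsic to the classes in $\mathcal H^n_{(\mathcal X,\mathcal Y)}$.
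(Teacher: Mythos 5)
Your proof is correct and follows the same route as the paper: you take $\tilde\alpha=\phi_z$ (the partition-of-unity lift) and $\tilde\beta=0$ in the Gauss-Manin formula (\ref{Gauss Manin}), invoke the identity $\iota_{\tilde\pa_{z,\mathcal X}}\lrcorner d\phi_z|_{X_z}=Res_{X_z}\bigl(\pa_z\omega_z-d_M(\pa_z\log(P_z)\phi_z)\bigr)$ established in the preceding paragraphs, and read off the second component from $\iota_{\pa/\pa z}\phi_z=0$. Your explicit remark that $\phi_z$ carries no $dz$-component, and the emphasis on the pole-order reduction making the residue well-defined as a form (not merely a class), are both exactly the points the paper relies on, stated a bit more explicitly than the paper's terse "Then the variation formula implies that."
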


To see the effect of the second component on the right side, let us assume that $Y_z=X_z\cap
H$, where $H$ is a fixed hypersurface in $M$ with defining equation
$Q=0$. Then we can choose a $\epsilon$-tube $T_\epsilon(\Gamma)$
\cite{Rational Integral} of $\Gamma\in H_n(X_z, Y_z)$ with $\pa
T_{\epsilon}(\Gamma)\subset H$. Hence
\begin{eqnarray*}
    {1\over 2\pi i}\int_{T_\epsilon(\Gamma)} - d_M \left( \pa_z \log(P_{z})  \phi_{z}\right)&=&{1\over 2\pi i}\int_{T_\epsilon(\pa\Gamma)}\left(\pa_z \log(P_{z})  \phi_{z}
    \right)|_{H}\\
    &=&\int_{\pa \Gamma} Res_{Y_z}\left(\pa_z \log(P_{z})  \phi_{z}
    \right)|_{H}\\
    &=&-\int_{\pa \Gamma} \iota_{n_{z,\mathcal Y}}\lrcorner \phi_z
\end{eqnarray*}
which cancels exactly the second component on the right side of (\ref{variation res}). Therefore,
$$
    \pa_z <\Gamma,  \left( Res_{X_z} {\omega_z},0 \right)>={1\over 2\pi i}\int_{T_\epsilon(\Gamma)} \pa_z \omega_z
$$

We can localize the above observation and consider the following
situation: on each $U_\alpha$, suppose we can choose $Q_{\alpha}$
independent of $z$ such that $P_{z,\alpha}, Q_{\alpha}$ are
transversal and $Y_{z}\cap U_\alpha \subset \{Q_{\alpha}=0,
P_{z,\alpha}=0 \}$. Suppose we have a relative cycle $\Gamma\in
H_n(X_z, Y_z)$ where we can choose a $\epsilon$-tube
$T_\epsilon(\Gamma)$ such that $\pa T_\epsilon(\Gamma)\cap U_\alpha$
lies in $\{Q_{\alpha}=0\}$. We have the pairing
\begin{eqnarray*}
    <\Gamma, \left(Res_{X_z}\omega_z, 0\right)>=\lim_{\epsilon\to 0}{1\over 2\pi i}\int_{T_{\epsilon}(\Gamma)}\omega_z
\end{eqnarray*}
However, the right hand side doesn't depend on $\epsilon$. In fact, let $T^\epsilon_\delta(\Gamma)$ be a solid annulus over $\Gamma$. By Stokes's theorem, we have
$$
    \int_{T_{\epsilon}(\Gamma)}\omega_z-\int_{T_{\sigma}(\Gamma)}\omega_z=\int_{\pa T^\epsilon_\delta(\Gamma)}\omega_z
$$
since $\pa T^\epsilon_\delta(\Gamma)\cap U_\alpha\subset
\{Q_\alpha=0\}$ for each $\alpha$, the above integral vanishes.
Therefore the integral $\int_{T_{\epsilon}(\Gamma)}\omega_z$ doesn't
depend on the position of the $\epsilon$-tube if we impose the
boundary condition as above. It follows immediately that
$$
    ({\pa_z})^k <\Gamma, \left(Res_{X_z}\omega_z, 0\right)>={1\over 2\pi i}\int_{T_{\epsilon}(\Gamma)}({\pa_z})^k \omega_z
$$
where $\pa T_\epsilon(\Gamma)\cap U_\alpha$ lies in $\{Q_{\alpha}=0\}$. Applying this to a Picard-Fuchs operator, we get

\begin{prop}\label{Inhomog}[Inhomogeneous Picard-Fuchs Equation]
Let $\mathcal D=\mathcal D({\nabla_{\pa_z}^{GM}})$ be a Picard-Fuchs operator, i.e.
\begin{eqnarray}
    \mathcal D(\pa_z) \omega_z=-d\beta_z
\end{eqnarray}
for some rational $(n-1,0)$-form $\beta_z$ with poles along $X_z$.
Then under the above local choices, we have
\begin{eqnarray}\label{localization}
 \mathcal D(\pa_z) <\Gamma, \left(Res_{X_z}\omega_z, 0\right)>={1\over 2\pi i}\int_{T_\epsilon(\pa\Gamma)}\beta_z.
\end{eqnarray}
\end{prop}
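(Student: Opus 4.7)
The plan is to combine the identity
$$(\partial_z)^k \langle \Gamma, (Res_{X_z}\omega_z, 0)\rangle = \frac{1}{2\pi i}\int_{T_\epsilon(\Gamma)} (\partial_z)^k \omega_z,$$
established in the paragraph just preceding the proposition under the local hypothesis $\partial T_\epsilon(\Gamma) \cap U_\alpha \subset \{Q_\alpha = 0\}$, with Stokes' theorem applied to the tube $T_\epsilon(\Gamma)$.

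First I will write the Picard-Fuchs operator as $\mathcal D(\partial_z) = \sum_k c_k(z)\partial_z^k$ and observe that its coefficients $c_k(z)$ are scalars along the fibers of $T_\epsilon(\Gamma) \to \Gamma$, so they commute with the tube integration. By linearity in $\omega_z$, the displayed identity upgrades to
$$\mathcal D(\partial_z) \langle \Gamma, (Res_{X_z}\omega_z, 0)\rangle = \frac{1}{2\pi i}\int_{T_\epsilon(\Gamma)} \mathcal D(\partial_z)\omega_z.$$
I will then substitute the Picard-Fuchs relation $\mathcal D(\partial_z)\omega_z = -d\beta_z$ and invoke Stokes' theorem. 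This step is legitimate because $\beta_z$ is rational with polar locus contained in $X_z$, and is therefore smooth on the open tube $T_\epsilon(\Gamma)$, which sits inside $M \setminus X_z$ at normal distance $\epsilon > 0$.

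The remaining task is to identify $\partial T_\epsilon(\Gamma)$ with $T_\epsilon(\partial\Gamma)$. The tube is an $S^1$-bundle of small normal circles in $M$ over the relative cycle $\Gamma$; its boundary as a manifold-with-corners is precisely the circle bundle over $\partial\Gamma$, which coincides with $T_\epsilon(\partial\Gamma)$ up to an orientation sign and, by the local hypothesis, sits inside $\{Q_\alpha = 0\}$ as required for the previous independence-of-$\epsilon$ argument to apply. The main subtlety I anticipate is the orientation bookkeeping: I will need to check that the natural conventions give $\partial T_\epsilon(\Gamma) = -T_\epsilon(\partial\Gamma)$, so that the two sign flips (from $\mathcal D(\partial_z)\omega_z = -d\beta_z$ and from Stokes) cancel and deliver the positive sign on the right-hand side of (\ref{localization}). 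Once this sign is verified, the proposition follows in essentially one line.
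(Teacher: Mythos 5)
Your proposal is correct and follows essentially the same route the paper intends: the paper simply says "applying this to a Picard-Fuchs operator, we get," with the same ingredients implicit — pushing the operator inside the tube integral via the preceding $(\partial_z)^k$ identity, substituting $\mathcal D(\partial_z)\omega_z = -d\beta_z$, and applying Stokes on the tube. Your sign bookkeeping $\partial T_\epsilon(\Gamma) = -T_\epsilon(\partial\Gamma)$ is also consistent with the convention the paper uses a few lines earlier when converting $-\int_{T_\epsilon(\Gamma)} d_M(\cdot)$ into $\int_{T_\epsilon(\partial\Gamma)}(\cdot)$.
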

In the next section, we will derive a general formula for $\beta_z$ using toric method.

 More generally, suppose that $\{Q_\alpha\}$ depends on $z$
which is denoted by $Q_{z,\alpha}$, and we put our $\epsilon$-tube
$T_\epsilon(\Gamma_z)$ inside $\{Q_{z,\alpha}\}=0$ on $U_\alpha$.
Then
\begin{eqnarray*}
    \int_{\Gamma_z} Res_{X_z} \omega_z={1\over 2\pi i}\int_{T_{\epsilon}(\Gamma_z)} \omega_z=\sum_{\alpha}{1\over 2\pi i}
    \int_{T_{\epsilon}(\Gamma_z)} \rho_\alpha \omega_z
\end{eqnarray*}
and the integration doesn't depend on $\epsilon$ assuming the boundary condition as above. Applying the variation formula (\ref{variation formula}) we get
\begin{eqnarray*}
    {\pa\over \pa z}\sum_{\alpha}{1\over 2\pi i}
    \int_{T_{\epsilon}(\Gamma_z)} \rho_\alpha \omega_z=\sum_{\alpha}{1\over 2\pi i}
    \int_{T_{\epsilon}(\Gamma_z)} \rho_\alpha \pa_z\omega_z-\sum_{\alpha}{1\over 2\pi i}
    \int_{T_{\epsilon}(\pa\Gamma_z)} \rho_\alpha \left(\iota_{n_{z, Q_\alpha}}\lrcorner\omega_z\right)|_{Q_{z,\alpha}=0}
\end{eqnarray*}
where $n_{z,Q_\alpha}$ is the normal vector field corresponding to the deformation of $\{Q_{z,\alpha}=0\}$ inside $M$. In particular, we have
$\iota_{n_{z, Q_\alpha}}\lrcorner dQ_{z,\alpha}|_{\{Q_{z,\alpha=0}\}}=-\pa_z Q_{z,\alpha}|_{\{Q_{z,\alpha=0}\}}$. Hence
$$
    \left(\iota_{n_{z, Q_\alpha}}\lrcorner\omega_z\right)|_{Q_{z,\alpha}=0}=-Res_{Q_{z,\alpha}=0}\left(\pa_z\log (Q_{z,\alpha}) \omega_z\right)
$$
Putting together the last three equations, we arrive at 

\begin{prop}[cf. \cite{mirror calculation}]
\begin{eqnarray*}
    {\pa^k\over \pa z^k}\left( \int_{\Gamma_z} Res_{X_z} \omega_z \right)={1\over 2\pi i}\int_{T_{\epsilon}(\Gamma_z)} \pa_{z}^k \omega_z
    +\sum_{l=1}^k {\pa^{k-l}\over \pa z^{k-l}} \sum_{\alpha}{1\over 2\pi i}
    \int_{T_{\epsilon}(\pa\Gamma_z)} \rho_\alpha Res_{Q_{z,\alpha}=0}\left(\pa_z\log (Q_{z,\alpha}){\pa^{l-1}\over \pa
    z^{l-1}} \omega_z\right)
\end{eqnarray*}
\end{prop}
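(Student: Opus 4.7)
The plan is to prove the statement by induction on $k$, treating the displayed recursion formula immediately preceding the proposition as the base case. To keep the bookkeeping manageable, I would introduce the shorthand
\[
    T(\eta):=\frac{1}{2\pi i}\int_{T_\epsilon(\Gamma_z)}\eta,\qquad
    B(\eta):=\sum_\alpha\frac{1}{2\pi i}\int_{T_\epsilon(\pa\Gamma_z)}\rho_\alpha\,\mathrm{Res}_{Q_{z,\alpha}=0}\!\left(\pa_z\log(Q_{z,\alpha})\,\eta\right)
\]
for forms $\eta$ that are holomorphic on $M\setminus X_z$ and satisfy the boundary condition $\pa T_\epsilon(\Gamma_z)\cap U_\alpha\subset\{Q_{z,\alpha}=0\}$. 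With this notation, the first identity to quote is $\int_{\Gamma_z}\mathrm{Res}_{X_z}\omega_z=T(\omega_z)$, which holds because $\omega_z$ has a simple pole along $X_z$. Combining the two displays preceding the proposition yields the key one-step recursion
\[
    \pa_z T(\eta)=T(\pa_z\eta)+B(\eta).
\]

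The goal then reduces to proving by induction on $k$ the operator identity
\[
    \pa_z^k T(\omega_z)=T(\pa_z^k\omega_z)+\sum_{l=1}^k \pa_z^{k-l}\,B(\pa_z^{l-1}\omega_z),
\]
which is exactly the claim of the proposition after unwinding the shorthand. The case $k=1$ is the recursion above applied to $\eta=\omega_z$. For the inductive step, assuming the formula at level $k$, I would apply $\pa_z$ to both sides, use the one-step recursion on $T(\pa_z^k\omega_z)$ to produce $T(\pa_z^{k+1}\omega_z)+B(\pa_z^k\omega_z)$, and then reindex the sum, noting that the newly produced $B(\pa_z^k\omega_z)$ is precisely the missing $l=k+1$ term.

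The one subtle point, and the main thing that must be checked, is that the one-step recursion is legitimately applicable to each of the forms $\pa_z^{l-1}\omega_z$ that arise in the induction, even though these have poles of order $l$ along $X_z$ rather than simple poles. The resolution is that the derivation of the recursion right before the proposition used only (i) that the tube integral $T(\eta)$ is independent of the tube radius $\epsilon$ under the chosen boundary condition, and (ii) the variation formula (\ref{variation formula}) applied to a form smooth on the tube. Both inputs depend only on $\eta$ being holomorphic away from $X_z$ and on the same $Q_{z,\alpha}$ boundary condition, not on the order of the pole along $X_z$. Hence the recursion propagates through the induction without change, and the proposition follows.
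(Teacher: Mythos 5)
Your proof is correct and follows essentially the same route as the paper: the two displays immediately preceding the proposition supply the one-step recursion $\pa_z T(\eta)=T(\pa_z\eta)+B(\eta)$, and the paper's ``putting together the last three equations'' is just the iteration that you carry out explicitly by induction on $k$. Your remark that the recursion remains valid for the forms $\pa_z^{l-1}\omega_z$ with higher-order poles along $X_z$ (since only holomorphy off $X_z$ and the $\{Q_{z,\alpha}=0\}$ boundary condition enter) is exactly the point the paper uses implicitly, so the proposal is sound.
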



\section{Exact GKZ Differential Equation and Toric geometry}

In this section, we study the Picard-Fuchs differential operators arising
from a generalized GKZ hypergeometric systems
\cite{HKSY}\cite{Hosono-Lian} for toric Calabi-Yau hypersurfaces and
derive a general formula for the $\beta$-term of an inhomogeneous Picard-Fuchs equation, from toric data.\\

We first consider the special case of a weighted projective space, where $\beta$-term will be much simpler than in the general case, which will be considered at the end of this section. Let
$\mathbb{P}^4(\bl{w})=\mathbb{P}^4(w_1,w_2,w_3,w_4,w_5)$. We assume
that $w_5=1$ and it's of Fermat-type, i.e., $w_i|d$ for each i,
where $d=w_1+w_2+w_3+w_4+w_5$. There's associated 4-dimensional
integral convex polyhedron given by the convex hull of the integral
vectors
\begin{eqnarray*}
    \Delta=\left\{(x_1,\cdots,x_5)\in \mathbb{R}^5|\sum_{i=1}^5 w_i x_i=0, x_i\geq -1   \right\}
\end{eqnarray*}
If we choose the basis $\{e_i=(1,0,0,0,-w_i), i=1..4\}$, then the
vertices is given by
\begin{eqnarray*}
                \Delta: && v_1=({d\over w_1}-1,-1,-1,-1) \\&& v_2=(-1,{d\over w_2}-1,-1,-1) \\&&
             v_3=(-1,-1,{d\over w_3}-1,-1)\\ && v_4=(-1,-1,-1,{d\over w_4}-1)\\ && v_5=(-1,-1,-1,-1)
\end{eqnarray*}
and the vertices of its dual polytope is given by
\begin{eqnarray*}
            \Delta^*: && v^*_1=(1,0,0,0)\\ && v^*_2=(0,1,0,0)\\ &&
              v^*_3=(0,0,1,0)\\ && v^*_4=(0,0,0,1)\\ && v^*_5=(-w_1,-w_2,-w_3,-w_4)
\end{eqnarray*}
Let $v_k^*, k=0,1,2,..,$ be integral points of $\Delta^*$, where
$v_0^*=(0,0,0,0)$. We write
\begin{eqnarray}
    f_{\Delta^*}(x)=\sum_{v_k^*\in \Delta^*}a_k X^{v_k^*}
\end{eqnarray}
which is the defining equation for our Calabi-Yau hypersurfaces in
the anti-canonical divisor class. Here $X=\{X_1,X_2,X_3,X_4\}$ is the toric coordinate, $ X^{v_k^*}=\prod\limits_{j=1}^4 X_j^{v^*_{k,j}}$. If we use homogeneous coordinate
$\cite{Homogeneous Coord}$ $\{z_\rho, 1\leq \rho \leq 5\}$
corresponding to the one-dim cone $\{v_\rho, 1\leq \rho\leq 5\}$,
then the toric coordinate can be written by homogeneous coordinate
\begin{eqnarray}
    X_j={z_j^{d/w_j}\over \prod\limits_{\rho=1}^5 z_\rho}, \ \ \ j=1,2,3,4
\end{eqnarray}
The relevant rational form with pole of order one along the
hypersurface is given by
\begin{eqnarray}
    \Pi(a)&=&{\prod\limits_{i=1}^5 w_i\over d^3}{1\over  \sum_{v_k^*\in \Delta^*}a_k X^{v_k^*} } \prod_{j=1}^4 {dX_j\over X_j} \nonumber\\
    &=& {\Omega_0\over \sum\limits_{v_k^*\in \Delta^*} a_k \prod\limits_{\rho=1}^5 z_\rho^{<v_k^*,v_\rho>+1}}\nonumber\\
    &=&{\Omega_0\over  a_0 \prod\limits_{\rho=1}^5 z_\rho+\sum\limits_{\rho=1}^5 a_\rho z_\rho^{d/w_\rho}+ \sum\limits_{v_k^*\in \Delta^*, k>5} a_k \prod\limits_{\rho=1}^5 z_\rho^{<v_k^*,v_\rho>+1}}
\end{eqnarray}
where $\Omega_0=\sum_{\rho=1}^5 (-1)^{\rho-1}w_\rho z_\rho
dz_1\wedge \cdots \hat{dz_\rho}\wedge\cdots dz_5$.  Define the
relation lattice by
$$
    L=\{ l=(l_0,l_1,...)\in \mathbb Z^{|\Delta^*|+1} | \sum_i l_i \bar v_i^*=0 \}, \ \ \mbox{where}\ \ \bar v_i^*=(1,v_i^*), \ v_i^*\in \Delta^*
$$
The moduli variable associated with the choice of a basis
$\{l^{(k)}\}$ for $L$ is given by \cite{HKSY}
$$
    x_k=(-1)^{l_0^{(k)}}a^{l^{(k)}}
$$
The key idea here is to consider the following 1-parameter family of automorphisms
$$
    \phi_t: z_\rho\to ({a_0\over a_\rho})^{{w_\rho\over d}t}z_\rho, \ \ \ 1\leq \rho \leq 5.
$$
Put
$$
    \Pi_t(a)=\phi_t^* \Pi(a).
$$
It satisfies the differential equation
$$
    {\pa_t} \left(\phi_t^* \Pi(a)\right)=\mathcal L_{V}\phi_t^*\Pi(a)
$$
where $V=\sum\limits_{\rho=1}^5{w_\rho\over d}(\log{a_0\over a_\rho}) z_\rho {\pa\over \pa z_\rho}$ is the generating vector field for $\phi_t$,
$\mathcal L_V$ is the Lie derivative. This is solved by
\begin{eqnarray}
\phi_t^* \Pi(a)=e^{t\mathcal L_V}\Pi(a)
\end{eqnarray}
Define
\begin{eqnarray}
    \tilde \Pi(x)=a_0 \Pi_1(a)
\end{eqnarray}
$\tilde \Pi(x)$ is a function of $\{x_k\}$
only.  Indeed,
\begin{eqnarray*}
    \tilde \Pi(x)={  \Omega_0\over  \prod\limits_{\rho=1}^5 z_\rho+ \left(\prod\limits_{\rho=1}^5({a_\rho\over a_0})^{w_\rho/d} \right)\sum\limits_{\rho=1}^5  z_\rho^{d/w_\rho}+\sum\limits_{v_k^*\in \Delta^*, k>5} {a_k\over a_0} \prod\limits_{\rho=1}^5\left( {a_0\over a_\rho}\right)^{{w_\rho\over d}<v_k^*,v_\rho>} \prod\limits_{\rho=1}^5 z_\rho^{<v_k^*,v_\rho>+1}}
\end{eqnarray*}
Since we have
\begin{eqnarray*}
    \sum_{\rho=1}^5 w_\rho v_\rho=0, \ \ v_k^*=\sum_{\rho=1}^5 {w_\rho\over d}<v_k^*,v_\rho>v_\rho^*
\end{eqnarray*}
we see that both $\left(\prod\limits_{\rho=1}^5({a_\rho\over a_0})^{w_\rho/d} \right)$ and ${a_k\over a_0} \prod\limits_{\rho=1}^5\left( {a_0\over a_\rho}\right)^{{w_\rho\over d}<v_k^*,v_\rho>} $ can be written in terms of $x_k$'s as an algebraic function.

Given an integral point $l\in L$, consider the GKZ operator (it differs from the standard GKZ operator by a factor of $a_0\prod_{l_i>0}a_i^{l_i}$)
\begin{eqnarray*}
    D_l&=& a_0\left\{ \prod_{l_i>0}a_i^{l_i}\left({\pa\over \pa a_i}\right)^{l_i} -a^l  \prod_{l_i<0}a_i^{-l_i}\left({\pa\over \pa a_i}\right)^{-l_i}  \right\}\\
    &=&\left\{\prod_{j=1}^{l_0}(a_0{\pa\over \pa a_0}-j) \prod_{i\neq 0,l_i>0} \prod_{j=0}^{l_i-1}(a_i{\pa\over \pa a_i}-j)-a^l \prod_{j=1}^{-l_0}(a_0{\pa\over \pa a_0}-j) \prod_{i\neq 0, l_i<0} \prod_{j=0}^{-l_i-1} (a_i{\pa\over \pa a_i}-j) \right\}a_0\\
    &=& \tilde D_l a_0
\end{eqnarray*}
where we use the convention that $\prod\limits_{i=1}^m(\cdots)=1$ if $m\leq 0$. From
$$
    D_l \Pi(a)=0
$$
We get
\begin{eqnarray}
    e^{\mathcal L_V}\tilde D_l e^{-\mathcal L_V}\tilde \Pi(x)=0
\end{eqnarray}

\begin{lem}
    $$
        e^{\mathcal L_V} (a_i{\pa\over \pa a_i})e^{-\mathcal L_V}=a_i{\pa\over \pa a_i}+\delta_{1\leq i\leq 5}\mathcal L_{{w_i\over d}z_i{\pa\over \pa z_i}}-\delta_{i,0}\mathcal L_{\sum\limits_{\rho=1}^5 {w_\rho\over d}z_\rho{\pa\over \pa z_\rho}}
    $$
    here $\delta_{1\leq i\leq 5}=1$ if $1\leq i\leq 5$ and otherwise $0$.
\end{lem}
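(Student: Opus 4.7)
The plan is to apply the standard Baker--Campbell--Hausdorff expansion
\begin{eqnarray*}
e^{\mathcal L_V}\,A\,e^{-\mathcal L_V}=\sum_{n\geq 0}{1\over n!}\mathrm{ad}_{\mathcal L_V}^n(A)
\end{eqnarray*}
to $A=a_i{\pa\over \pa a_i}$, and then to check that this infinite series collapses after the first commutator. Everything else will be a direct computation.

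First I would compute $[\mathcal L_V,a_i{\pa\over \pa a_i}]$. Writing
$$V=\sum_{\rho=1}^5 f_\rho(a)\,z_\rho{\pa\over \pa z_\rho},\qquad f_\rho(a)={w_\rho\over d}\log{a_0\over a_\rho},$$
the essential observation is that the $a$-dependence of $V$ sits entirely in the scalar coefficients $f_\rho(a)$, while the Euler fields $z_\rho{\pa\over \pa z_\rho}$ are independent of $a$. Since $a_i{\pa\over \pa a_i}$ commutes with each $\mathcal L_{z_\rho{\pa\over \pa z_\rho}}$, we obtain
\begin{eqnarray*}
\bigl[a_i{\pa\over \pa a_i},\mathcal L_V\bigr]=\sum_{\rho=1}^5\Bigl(a_i{\pa f_\rho\over \pa a_i}\Bigr)\,\mathcal L_{z_\rho{\pa\over \pa z_\rho}}.
\end{eqnarray*}
A direct evaluation of the logarithmic derivative gives $a_0{\pa f_\rho\over \pa a_0}={w_\rho\over d}$ when $i=0$, and $a_i{\pa f_\rho\over \pa a_i}=-{w_i\over d}\,\delta_{i,\rho}$ when $1\leq i\leq 5$. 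Repackaging the linear combination of Lie derivatives as the Lie derivative of the summed vector field produces exactly the two terms appearing on the right-hand side of the lemma (with the correct signs after flipping the commutator).

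Second, I would check termination. In both cases the vector field that appears in $\mathrm{ad}_{\mathcal L_V}(a_i{\pa\over \pa a_i})$ is an $a$-independent linear combination of the Euler fields $z_\rho{\pa\over \pa z_\rho}$. Since $[z_\rho{\pa\over \pa z_\rho},z_\sigma{\pa\over \pa z_\sigma}]=0$ for all $\rho,\sigma$, such a vector field commutes with $V$, and hence its Lie derivative commutes with $\mathcal L_V$. Therefore $\mathrm{ad}_{\mathcal L_V}^2(a_i{\pa\over \pa a_i})=0$ and every higher-order term in the BCH expansion vanishes, leaving only the two terms already computed.

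The one place to be careful is conceptual rather than computational: $a_i{\pa\over \pa a_i}$ and $\mathcal L_V$ would commute were it not for the fact that the coefficients of $V$ depend on $a$, so the entire content of the commutator is captured by differentiating the scalars $f_\rho(a)$. Once that point is made precise, the whole argument is a one-line BCH expansion plus the observation that commuting Euler fields kill all higher commutators.
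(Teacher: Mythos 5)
Your proof is correct and follows essentially the same route as the paper: expand $e^{\mathcal L_V}(a_i\partial_{a_i})e^{-\mathcal L_V}$ by BCH, compute the first commutator by differentiating the $a$-dependent scalar coefficients of $V$, and observe that the resulting linear combination of Euler fields commutes with $V$, so the series terminates at the first order. The paper records the commutator $[V,a_i\partial_{a_i}]$ and the vanishing $[V,[V,a_i\partial_{a_i}]]=0$ directly, while you spell out the logarithmic-derivative evaluation and the termination argument more explicitly, but the substance is identical.
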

\begin{proof}Since
    \begin{eqnarray*}
        \left[V, a_i{\pa\over a_i}\right]&=&\delta_{1\leq i\leq 5}{{w_i\over d}z_i{\pa\over \pa z_i}}-\delta_{i,0}{\sum\limits_{\rho=1}^5 {w_\rho\over d}z_\rho{\pa\over \pa z_\rho}}\\
        \left[V,\left[V,a_i{\pa\over a_i}\right]\right]&=&0
\end{eqnarray*}
The lemma follows from the formula
\begin{eqnarray*}
     e^{\mathcal L_V} (a_i{\pa\over \pa a_i})e^{-\mathcal L_V}=a_i{\pa\over \pa a_i}+\sum_{k=1}^\infty {1\over k!}\mathcal L_{(ad_V)^k \left(a_i{\pa\over \pa a_i}\right)}
\end{eqnarray*}
\end{proof}
It follows from the lemma that
\begin{eqnarray}\label{PF-op}
    \left\{\prod_{j=1}^{l_0}(a_0{\pa\over \pa a_0}-j)\right. & \prod\limits_{i\neq 0, l_i>0}& \prod_{j=0}^{l_i-1} (a_i{\pa\over \pa a_i}-j+\delta_{1\leq i\leq 5}\mathcal L_{{w_i\over d}z_i{\pa\over \pa z_i}}) \nonumber\\
  -a^l \prod_{j=1}^{-l_0}(a_0{\pa\over \pa a_0}-j) &\prod\limits_{i\neq 0, l_i<0} &  \left.\prod_{j=0}^{-l_i-1} (a_i{\pa\over \pa a_i}-j+\delta_{1\leq i\leq 5}\mathcal L_{{w_i\over d}z_i{\pa\over \pa z_i}}) \right\}\tilde\Pi(x)=0
\end{eqnarray}
where we have used $\mathcal L_{\sum\limits_{\rho=1}^5 {w_\rho\over d}z_\rho{\pa\over \pa z_\rho}}\tilde \Pi(x)=0$ to eliminate the terms with $\mathcal L_{\sum\limits_{\rho=1}^5 {w_\rho\over d}z_\rho{\pa\over \pa z_\rho}}$. 

Observe that each Lie derivative $\mathcal L_{{w_i\over d}z_i{\pa\over \pa z_i}}$ commutes with all other operators appearing on the left side of (\ref{PF-op}). So we can move every term involving $\mathcal L_{{w_i\over d}z_i{\pa\over \pa z_i}}$ to the right side, so that (\ref{PF-op}) can now be explicitly written as
$$
    \tilde D_l \tilde \Pi(x)=-\sum_{i=1}^5 \mathcal L_{{w_i\over d}z_i{\pa\over \pa z_i}}\alpha_i
$$
where the $\alpha_i$ are (easily computable) $d$-closed 4-forms depending on $l$.  By the Cartan-Lie formula $\mathcal L_X=d\iota_X+\iota_X d$, we obtain the formula

\begin{prop}\label{beta term}[$\beta$-term Formula]
$$
    \tilde D_l \tilde \Pi(x)=-d\beta_l
$$
where $\beta_l=\sum_i \iota_{{w_i\over d}z_i{\pa\over \pa z_i}}\alpha_i$.
\end{prop}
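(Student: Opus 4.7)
The plan is to extract the $\beta$-term formula directly from equation (\ref{PF-op}), which the author has already established. The strategy has three stages: separate the operator into its pure scalar part and a remainder involving Lie derivatives, collect the remainder into a sum indexed by $i=1,\dots,5$, and finally apply Cartan's formula using the fact that $\tilde\Pi(x)$ is a top-degree form.

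First, I would expand the operator on the left side of (\ref{PF-op}). Each factor in the products is of the form $(a_i\partial_{a_i} - j)$ or $(a_i\partial_{a_i} - j + \mathcal L_{X_i})$ with $X_i={w_i\over d}z_i{\pa\over \pa z_i}$. Since $a_i\partial_{a_i}$ and the $\mathcal L_{X_j}$ commute with all other factors — they act on disjoint sets of variables (moduli versus fibre coordinates) — the multinomial expansion can be sorted unambiguously. The term with no $\mathcal L_{X_i}$ is precisely $\tilde D_l$, acting on $\tilde\Pi(x)$. Every other term contains at least one Lie-derivative factor.

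Next, I would use the fact that the $X_i$ commute pairwise, so $\mathcal L_{X_i}\mathcal L_{X_j} = \mathcal L_{X_j}\mathcal L_{X_i}$, to push one $\mathcal L_{X_i}$ to the outside of each mixed term. Grouping by the outermost Lie derivative lets me rewrite the remainder uniquely as $\sum_{i=1}^{5}\mathcal L_{X_i}\alpha_i$, where each $\alpha_i$ is a polynomial in $a\partial_a$-operators and remaining $\mathcal L_{X_j}$'s (for $j\ne i$ or with one less $\mathcal L_{X_i}$) applied to $\tilde\Pi(x)$. Because $\mathcal L$ and $\iota$ preserve form degree and $\tilde\Pi(x)$ is a 4-form, every $\alpha_i$ is also a 4-form. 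On the $4$-dimensional toric variety (after descending via the Euler-invariance $\mathcal L_{\sum {w_\rho\over d}z_\rho\partial_{z_\rho}}\tilde\Pi(x)=0$ that the author has already invoked), top forms are automatically $d$-closed.

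The final step is Cartan's magic formula: for a top form $\alpha_i$, $\mathcal L_{X_i}\alpha_i = d\iota_{X_i}\alpha_i + \iota_{X_i}d\alpha_i = d\iota_{X_i}\alpha_i$. Summing over $i$ and combining with the pure part yields
\[
\tilde D_l\tilde\Pi(x) = -\sum_{i=1}^{5} d\,\iota_{X_i}\alpha_i = -d\Bigl(\sum_{i=1}^{5}\iota_{{w_i\over d}z_i{\pa\over \pa z_i}}\alpha_i\Bigr) = -d\beta_l,
\]
which is the claimed formula. The main obstacle is purely bookkeeping — organizing the multinomial expansion in (\ref{PF-op}) so that the $\alpha_i$ are cleanly and canonically defined. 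Because all relevant operators commute past one another, this is routine once one fixes an ordering convention (e.g. always factoring out the Lie derivative with smallest index $i$ first), and no genuinely new analytic input is required beyond the top-degree closedness remark.
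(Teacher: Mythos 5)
Correct, and this is essentially the paper's own argument: regroup (\ref{PF-op}) so that the $\mathcal L$-free part is $\tilde D_l\tilde\Pi(x)$ and the remainder is $\sum_{i=1}^5 \mathcal L_{{w_i\over d}z_i\partial_{z_i}}\alpha_i$, then apply Cartan's formula. One small slip — $\iota_X$ lowers form degree by one, it does not preserve it — but since only Lie derivatives and $a_i\partial_{a_i}$ enter the construction of $\alpha_i$, your conclusion that the $\alpha_i$ are top-degree forms on the (4-dimensional) weighted projective space, hence $d$-closed, is still valid, and usefully makes explicit the ``$d$-closed'' claim the paper states without justification.
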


Next we consider the differential operators of the extended GKZ system induced by the automorphism of the ambient toric variety. The corresponds to the root of $\Delta^*$ \cite{Hosono-Lian}. Let
$$
    v_i^*\in R(\Delta^*), \ \ <v_i^*,v_{\rho_i}>=-1, \ \ <v_i^*, v_\rho>\geq 0 \ \mbox{for}\ \rho\neq \rho_i
$$
then we obtain an equation
\begin{eqnarray*}
    \left\{ \sum_{v_k^*\in \Delta^*}(<v_k^*, v_{\rho_i}>+1)a_{v_k^*}{\pa\over \pa a_{v_k^*+v_i^*}} \right\}{1\over a_0}e^{-\mathcal L_V}\tilde\Pi(x)
    =\mathcal L_{\left\{\prod\limits_{\rho=1}^5z_\rho^{<v_i^*, v_\rho>}\right\} z_{\rho_i}{\pa\over \pa z_{\rho_i}}} {1\over a_0}e^{-\mathcal L_V}\tilde\Pi(x)
\end{eqnarray*}
where in the formula we have identified $a_k\equiv a_{v^*_k}$ for convenience, and $a_{v_k^*+v_i^*}$ just corresponds to the point  $v_k^*+v_i^*$.
Note that
\begin{eqnarray*}
   & a_{v_i^*}\left\{ \sum_{v_k^*\in \Delta^*}(<v_k^*, v_{\rho_i}>+1)a_{v^*_k}{\pa\over \pa a_{v^*_k+v_i^*}} \right\}{1\over a_0}&\\=&\left\{ \sum_{v_k^*\in \Delta^*}(<v_k^*, v_{\rho_i}>+1){a_{v^*_k}a_{v_i^*}\over a_0 a_{v_i^*+v_k^*}}\left(a_{v^*_k+v_i^*} {\pa\over \pa a_{v^*_k+v_i^*}} \right)\right\}-2 {a_{v_i^*}a_{-v_i^*}\over a_0^2}&
\end{eqnarray*}
where the last term is zero if $-v_i^*\neq \Delta^*$. On the other hand, it's easy to compute
\begin{eqnarray*}
    e^{\mathcal L_V}\mathcal L_{\left\{\prod\limits_{\rho=1}^5 z_\rho^{<v_i^*, v_\rho>}\right\}z^{\rho_i}{\pa\over \pa z^{\rho_i}}} e^{-\mathcal L_V}=\prod\limits_{\rho=1}^5 \left( a_0\over a_\rho \right)^{{w_\rho\over d}<v_i^*,v_\rho>}\mathcal L_{\left\{\prod\limits_{\rho=1}^5 z_\rho^{<v_i^*, v_\rho>}\right\}z^{\rho_i}{\pa\over \pa z^{\rho_i}}}
\end{eqnarray*}
therefore we get the following
\begin{eqnarray*}
\left\{ \left\{ \sum_{v_k^*\in \Delta^*}(<v_k^*, v_{\rho_i}>+1){a_{v^*_k}a_{v_i^*}\over a_0 a_{v_i^*+v_k^*}}\left(a_{v^*_k+v_i^*} {\pa\over \pa a_{v^*_k+v_i^*}} +
\delta_{1\leq v_i^*+v_k^*\leq 5}\mathcal L_{{w_i\over d}z_{v_i^*+v_k^*}{\pa\over \pa z_{v_i^*+v_k^*}}}
\right)\right\}-2 {a_{v_i^*}a_{-v_i^*}\over a_0^2}   \right\}\tilde \Pi(x)\\=\left({a_{v_i^*}\over a_0}\right)\prod_{\rho} \left( a_0\over a_\rho \right)^{{w_\rho\over d}<v_i^*,v_\rho>}\mathcal L_{\left\{\prod_{\rho}z_\rho^{<v_i^*, v_\rho>}\right\}z^{\rho_i}{\pa\over \pa z^{\rho_i}}} \tilde \Pi(x)
\end{eqnarray*}
Again, by the Cartan-Lie formula we can easily write the right side as a $d$-exact form.

\subsection*{Example: $\bl{P}(1,1,1,1,1)$} 

We will compute the $\beta$-term for mirror quintic \cite{open mirror}, where $w=(1,1,1,1,1)$, and
$$
    f_{\Delta^*}=a_0\prod\limits_{i=1}^5 z_i+ \sum\limits_{i=1}^5 a_i z_i^5.
$$
The relation lattice is generated by
$$
    l=(-5,1,1,1,1,1)
$$
and the moduli variable is
\begin{eqnarray}.
    x=(-1)^{l_0}a^l=-{\prod\limits_{i=1}^5 a_i\over a_0^5}
\end{eqnarray}
Put
\begin{eqnarray}
    \tilde \Pi(x)={\omega\over \prod\limits_{i=1}^5 z_i- x^{1\over 5} \sum\limits_{i=1}^5 z_i^5}
\end{eqnarray}
Then our $\beta$-term formula Proposition \ref{beta term} for the Picard-Fuchs equation yields
\begin{eqnarray}\label{picard-fuchs}
    \left\{\prod\limits_{i=1}^5 \left( \Theta_x+{1\over 5}\mathcal L_{z_i\pa_{z_i}} \right)-x \prod\limits_{i=1}^5 \left(  5\Theta_x+i \right)  \right\} \tilde \Pi(x)=0
\end{eqnarray}
or
\begin{eqnarray*}
    \left(\Theta_x^5-x\prod\limits_{i=1}^5(5 \Theta_x+i)\right)\tilde \Pi(x)=-d\beta_x, \ \ d\beta_x=\sum_{I\subset \{1,\dots, 5\}, |I|\geq 1}{\Theta_x^{5-|I|}\over 5^{|I|}}\prod_{k\in I} (\mathcal L_{z_k\pa_k})\tilde \Pi(x).
\end{eqnarray*}
Note that the sum of terms for $|I|=1$ in the expression of $d\beta_x$ is zero. A choice of $\beta_x$ relevant in the next section will be
\begin{eqnarray*}
    \beta_x&=&{\Theta_x^3\over 5^2}\left\{ z_1\iota_{\pa_1} \mathcal L_{z_2\pa_2}+ z_3\iota_{\pa_3}\mathcal L_{z_4\pa_4} +(z_3\iota_{\pa_3}+z_4\iota_{\pa_4})(\mathcal L_{z_1\pa_1}+\mathcal L_{z_2\pa_{2}}) \right\}\tilde \Pi(x)\\
    &&+{\Theta_x^2\over 5^3}\left\{(z_3\iota_{\pa_3}+z_4\iota_{\pa_4})\mathcal L_{z_1\pa_1}\mathcal L_{z_2\pa_2} + z_3\iota_{\pa_3}(\mathcal L_{z_1\pa_1}+\mathcal L_{z_2\pa z_2})\mathcal L_{z_4\pa_4} \right\}\tilde \Pi(x)\\
    &&+ {\Theta_x \over 5^4} z_3\iota_{\pa_3}\mathcal L_{z_4\pa_4}\mathcal L_{z_1\pa_1}\mathcal L_{z_2\pa_2} \tilde \Pi(x)
    +\sum_{I\subset \{1,\dots, 4\}, |I|\geq 1}{\Theta_x^{4-|I|}\over 5^{|I|+1}}z_5\iota_{\pa_5}\prod_{k\in I} (\mathcal L_{z_k\pa_k})\tilde \Pi(x)
\end{eqnarray*}
where $\iota_{\pa_i}$ is the contraction with the vector ${\pa\over \pa z_i}$.

\subsection*{Example: $\bl{P}(2,2,2,1,1)$}
The mirror of degree 8 hypersurface in $\bl{P}(2,2,2,1,1)$ has two complex moduli. The integral points of its dual polytope $\Delta^*$ is given
by
\begin{eqnarray*}
     \Delta^*: && v^*_0=(0,0,0,0) \\
     && v^*_1=(1,0,0,0)\\ && v^*_2=(0,1,0,0)\\ &&
              v^*_3=(0,0,1,0)\\ && v^*_4=(0,0,0,1)\\ && v^*_5=(-2,-2,-2,-1)\\
              && v^*_6=(-1,-1,-1,0)
\end{eqnarray*}
and in homogeneous coordinate
\begin{eqnarray*}
    f_{\Delta^*}=a_0 z_1z_2z_3z_4z_5+a_1 z_1^4+ a_2 z_2^4+a_3 z_3^4+a_4 z_4^8+a_5 z_5^8+ a_6 z_4^4z_5^4
\end{eqnarray*}
A basis of relation lattice is given by
$$
    l^{(1)}=(-4,1,1,1,0,0,1), \ \ l^{(2)}=(0,0,0,0,1,1,-2)
$$
We get the moduli coordinates
$$
    x_1={a_1a_2a_3a_6\over a_0^4}, \ x_2={a_4a_5\over a_6^2}
$$
The rational 4-form is given by
\begin{eqnarray*}
    \tilde \Pi(x)={\Omega_0\over z_1z_2z_3z_4z_5+x_1^{1\over 4}x_2^{1\over 8}\left(z_1^4+z_2^4+z_3^4+z_4^8+z_5^8 \right)
    +x_1^{1\over 4}x_2^{-{3\over 8}}z_4^4z_5^4}
\end{eqnarray*}
The exact GKZ equation from $l^{(1)}$ and $l^{(2)}$ can be read
\begin{eqnarray*}
\left\{(\Theta_{x_1}-2\Theta_{x_2})\prod_{j=1}^3 (\Theta_{x_1}+{1\over 4}\mathcal L_{z_i\pa_{z_i}})   -x_1\prod_{j=1}^4(4\Theta_{x_1}+j) \right\}\tilde \Pi(x)&=&0\\
\left\{ \left(\Theta_{x_2}+{1\over 8}\mathcal L_{z_4\pa_{z_4}}\right)\left(\Theta_{x_2}+{1\over 8}\mathcal L_{z_5\pa_{z_5}}\right)
-x_2 \left(\Theta_{x_1}-2\Theta_{x_2}\right)\left(\Theta_{x_1}-2\Theta_{x_2}-1\right) \right\}\tilde \Pi(x)&=&0
\end{eqnarray*}

\subsection*{Example: $\bl{P}(7,2,2,2,1)$}
The mirror of degree 14 hypersurface in $\bl{P}(7,2,2,2,1)$ has also two complex moduli. The integral points of its dual polytope $\Delta^*$ is given
by
\begin{eqnarray*}
     \Delta^*: && v^*_0=(0,0,0,0) \\
     && v^*_1=(1,0,0,0)\\ && v^*_2=(0,1,0,0)\\ &&
              v^*_3=(0,0,1,0)\\ && v^*_4=(0,0,0,1)\\ && v^*_5=(-7,-2,-2,-2)\\
              && v^*_6=(-3,-1,-1,-1)\\
              && v^*_7=(-4,-1,-1,-1)\\
              && v^*_8=(-1,0,0,0)
\end{eqnarray*}
and in homogeneous coordinate
\begin{eqnarray*}
    f_{\Delta^*}=a_0 z_1z_2z_3z_4z_5+a_1 z_1^2+ a_2 z_2^7+a_3 z_3^7+a_4 z_4^7+a_5 z_5^{14}+ a_6 z_1z_5^7+a_7z_2z_3z_4z_5^8+a_8z_2^2
    z_3^2z_4^2z_5^2
\end{eqnarray*}
A basis of relation lattice is given by \cite{Hosono-Lian}
\begin{eqnarray*}
    l^{(1)}=(-1,0,0,0,0,-1,1,1,0), \ \ l^{(2)}=(0,1,0,0,0,1,-2,0,0)\\
    l^{(3)}=(0,0,1,1,1,0,0,1,-4), \ \ l^{(4)}=(0,0,0,0,0,1,0,-2,1)
\end{eqnarray*}
We get the "moduli" coordinates before eliminating the automorphism generated by the roots
$$
    x_1=-{a_6a_7\over a_0 a_5}, \ x_2={a_1a_5\over a_6^2}, \ x_3={a_2a_3a_4a_7\over a_8^4}, \ x_4={a_5a_8\over a_7^2}
$$
The rational 4-form is given by
\begin{eqnarray*}
    \tilde \Pi(x)={\Omega_0\over z_1z_2z_3z_4z_5-x_1 x_2^{1\over 2}x_3^{1\over 7}x_4^{4\over 7}\left(z_1^2+z_2^7+z_3^7+z_4^7+z_5^{14} \right)
    -x_1x_3^{1\over 7}x_4^{4\over 7}z_1z_5^7-x_1x_2^{1\over 2}z_2z_3z_4z_5^8-x_1x_2^{1\over 2}x_3^{-{1\over 7}}x_4^{3\over 7}z_2^2z_3^2z_4^2z_5^2}
\end{eqnarray*}
We consider the GKZ operator corresponding to the following two relation vectors (\cite{Hosono-Lian})
$$
    l_{\{1,5\}}=(0,1,0,0,0,1,-2,0,0), \ \ l_{\{2,3,4,6\}}=(-1,0,1,1,1,0,1,0,-3)
$$
which gives two exact GKZ equations
\begin{eqnarray*}
    \left\{ \left( \Theta_{x_2}+{1\over 2}\mathcal L_{z_1\pa_{z_1}} \right)\left(-\Theta_{x_1}+\Theta_{x_2} +\Theta_{x_4}+{1\over 14}
    \mathcal L_{z_5\pa_{z_5}}  \right)-x_2\left( \Theta_{x_1}-2\Theta_{x_2} \right)\left( \Theta_{x_1}-2\Theta_{x_2} -1 \right)  \right\}\tilde\Pi(x)&=&0\\
    \left\{\left( \Theta_{x_1}-2\Theta_{x_2} \right) \prod_{i=2}^4\left(\Theta_{x_3}+{1\over 7}\mathcal L_{z_i\pa_{z_i}} \right)
    -x_1x_3x_4 \left(\Theta_{x_1}+1\right)\prod_{i=0}^2\left( \Theta_{x_4}-4\Theta_{x_3}-i \right)  \right\}\tilde\Pi(x)&=&0
\end{eqnarray*}
and the roots $v_7^*, v_8^*$ give two exact equations
\begin{eqnarray*}
\left\{ \left(\Theta_{x_1}+\Theta_{x_3}-2\Theta_{x_4}\right)-2x_1x_2\left(\Theta_{x_1}-2\Theta_{x_2} \right)-x_1\left( -\Theta_{x_1}+\Theta_{x_2} +\Theta_{x_4}+{1\over 14}
    \mathcal L_{z_5\pa_{z_5}}  \right) +x_1x_2^{1\over 2} \mathcal L_{z_5^7\pa_{z_1}} \right\}\tilde\Pi(x)&=&0\\
    \left\{ \left( \Theta_{x_4}-4\Theta_{x_3} \right)-2 x_1^2x_2x_4(\Theta_{x_1}+1)-x_1x_4\left(\Theta_{x_1}+\Theta_{x_3}-2\Theta_{x_4}\right)
    +x_1x_2^{1\over 2}x_3^{-{1\over 7}}x_4^{3\over 7}\mathcal L_{z_2z_3z_4z_5\pa_{z_1}}  \right\}\tilde \Pi(x)&=&0
\end{eqnarray*}

\subsection*{Calabi-Yau hypersurfaces in a general toric variety}
For Calabi-Yau hypersurfaces of non-Fermat type, we can repeat the derivation of the $\beta$-term above by replacing the homogeneous coordinates by the toric coordinates $X_i$. Write the defining equation of a general Calabi-Yau hypersurface is in toric coordinates: 
\begin{eqnarray*}
    f_{\Delta^*}(X)=\sum_{v_k^*\in \Delta^*}a_k X^{v_k^*}.
\end{eqnarray*}
Here we will assume that $v_0^*=(0,0,0,0), v_1^*=(1,0,0,0), v_2^*(0,1,0,0), v_3^*=(0,0,1,0), v_4^*=(0,0,0,1)$. Put
\begin{eqnarray*}
    \Pi(a)={1\over f_{\Delta^*}(X)}\prod_{j=1}^4 {dX_j\over X_j}
\end{eqnarray*}
As in the Fermat case above, we use the automorphism
\begin{eqnarray*}
   \phi: X_i\to {a_0\over a_i} X_i, \ \ 1\leq i\leq 4
\end{eqnarray*}
to transform $\Pi(a)$ into a form parameterized only by moduli variables $x_k=(-1)^{l^{(k)}_0}a^{l^{(k)}}$. 
We can now repeat the derivation of the $\beta$-term for each GKZ operator $D_l$, using the form $\tilde \Pi(x)=a_0 \phi^* \Pi(a)$. To summarize, we have the following result.

\begin{prop}
Let $\tilde \Pi(x)=a_0 \phi^* \Pi(a)$, and for each $l\in L$, let
$$
\tilde D_l=\prod_{j=1}^{l_0}(a_0{\pa\over \pa a_0}-j) \prod_{i\neq 0,l_i>0} \prod_{j=0}^{l_i-1}(a_i{\pa\over \pa a_i}-j)-a^l \prod_{j=1}^{-l_0}(a_0{\pa\over \pa a_0}-j) \prod_{i\neq 0, l_i<0} \prod_{j=0}^{-l_i-1} (a_i{\pa\over \pa a_i}-j).
$$
Then $\tilde \Pi(x)$ is killed by the differential operator obtained from $\tilde D_l$ by the substitutions:
\begin{eqnarray*}
    a_0{\pa\over \pa a_0}&\mapsto&  a_0{\pa\over \pa a_0}-\mathcal L_{\sum\limits_{j=1}^4 X_j{\pa\over \pa X_j}}\\
    a_j{\pa\over \pa a_j}&\mapsto& a_j{\pa\over \pa a_j}+\mathcal L_{X_j{\pa\over \pa X_j}}, \ \ \ 1\leq j\leq 4\\
    a_j{\pa\over \pa a_j}&\mapsto& a_j{\pa\over \pa a_j}, \ \ \ j>4.
\end{eqnarray*}
\end{prop}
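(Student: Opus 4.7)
The plan is to derive the statement from the classical GKZ identity $D_l\,\Pi(a)=0$ via a two-step reduction. First I rewrite $D_l = \tilde D_l \cdot a_0$ by the Weyl-algebra commutation $(\theta_0-j)a_0 = a_0(\theta_0-j+1)$ (with $\theta_i := a_i\pa_{a_i}$), which lets me pull the extra $a_0$ to the right through every $(\theta_0-j)$-factor, while $a_0$ commutes with every $\theta_i$ for $i\neq 0$ and with the multiplication by $a^l$; the convention $\prod_{j=1}^m(\cdots)=1$ for $m\le 0$ handles $l_0>0$ and $l_0<0$ uniformly. The GKZ identity then reads $\tilde D_l(a_0 \Pi(a))=0$. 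Since $\phi$ only rescales the toric coordinates $X_j$, it commutes with multiplication by $a_0$, so $\tilde \Pi(x) = \phi^*(a_0 \Pi(a))$. Applying $\phi^*$ to the rearranged identity yields $\phi^*\tilde D_l(\phi^*)^{-1}\,\tilde \Pi(x)=0$, and the remaining task is to identify the conjugated operator with the one obtained by the stated substitutions.

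For the conjugation I embed $\phi$ in the one-parameter family $\phi_t:X_j\mapsto (a_0/a_j)^t X_j$ ($1\le j\le 4$), whose infinitesimal generator is $V = \sum_{j=1}^4 \log(a_0/a_j)\,X_j\,\pa_{X_j}$. On forms this gives $\phi_t^* = e^{t\mathcal L_V}$, hence $\phi^*\tilde D_l(\phi^*)^{-1} = e^{\mathcal L_V}\tilde D_l\,e^{-\mathcal L_V}$. Since $e^{\mathcal L_V}$ is multiplicative in composition and fixes the monomial $a^l$ (whose variables are disjoint from those of $V$), it is enough to compute $e^{\mathcal L_V}\theta_i\,e^{-\mathcal L_V}$ for each $i$. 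A direct differentiation of the coefficients $\log(a_0/a_k)$ of $V$ gives
\[
[\mathcal L_V,\theta_0] = -\mathcal L_{\sum_{j=1}^4 X_j\pa_{X_j}},\quad [\mathcal L_V,\theta_i]=\mathcal L_{X_i\pa_{X_i}}\ (1\le i\le 4),\quad [\mathcal L_V,\theta_i]=0\ (i>4),
\]
precisely the Kronecker-delta pattern picked out by $\theta_i$. Crucially, the resulting Euler-type vector fields $X_j\pa_{X_j}$ and $\sum_{j=1}^4 X_j\pa_{X_j}$ have $X$-independent coefficients, hence commute with $V$ itself; therefore $[\mathcal L_V,[\mathcal L_V,\theta_i]]=0$ and the Baker-Campbell-Hausdorff series truncates to $e^{\mathcal L_V}\theta_i\,e^{-\mathcal L_V} = \theta_i + [\mathcal L_V,\theta_i]$. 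This yields exactly the three substitutions prescribed by the proposition.

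The only subtlety I expect is confirming that the formal substitution into $\tilde D_l$ is unambiguous despite the non-canonical ordering of its factors. This is immediate because each $\theta_i$ acts only in the $a$-variables while each $\mathcal L_{X_j\pa_{X_j}}$ acts only in the $X$-variables, so the substituted operators commute among themselves in exactly the same way as the original $\theta_i$. Multiplicativity of conjugation then identifies $e^{\mathcal L_V}\tilde D_l\,e^{-\mathcal L_V}$ with the formally substituted operator in the statement, which annihilates $\tilde \Pi(x)$ as required. No genuine obstacle is anticipated beyond these bookkeeping checks.
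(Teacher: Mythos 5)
Your proof is correct and follows essentially the same route as the paper: factor $D_l=\tilde D_l\,a_0$ via the Weyl-algebra relation, recognize $\phi^*=e^{\mathcal L_V}$ for the Euler-type generator $V=\sum_{j=1}^4\log(a_0/a_j)X_j\partial_{X_j}$, and conjugate each $\theta_i$ through $e^{\mathcal L_V}$ using the fact that the BCH series truncates since $[V,X_j\partial_{X_j}]=0$ — which is exactly the paper's Lemma for the Fermat case transcribed to toric coordinates. One cosmetic slip: the justification of $[\mathcal L_V,[\mathcal L_V,\theta_i]]=0$ should say the Euler fields $X_j\partial_{X_j}$ mutually commute (so $V$, being an $a$-dependent but $X$-constant linear combination of them, commutes with each of them), not that their "coefficients are $X$-independent" — the coefficient of $X_j\partial_{X_j}$ is $X_j$ itself.
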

From this, the Cartan-Lie formula then explicitly yields a $\beta$-term for each GKZ operator:
$$
\tilde D_l\tilde \Pi(x)=-d\beta_l.
$$

\section{Application to Abel-Jacobi Map And Inhomogeneous Picard-Fuchs Equation}
In this section, we apply the inhomogeneous Picard-Fuchs equation Proposition \ref{Inhomog} to study the Abel-Jacobi map for toric Calabi-Yau hypersurfaces that arise in open string mirror symmetry \cite{D-brane Normal Function}. We will derive a purely algebraic double residue formula for computing the boundary integral of the $\beta$-term. \\

We will keep the same notation as in section 1: $X_z$ will be a
family of hypersurfaces moving in fixed 4-dimensional ambient space $M$
parametrized by $z$, and we consider two fixed hypersurfaces $Y_1:
Q_1=0, Y_2: Q_2=0$, where $Q_1, Q_2$ are sections of some fixed
divisors. We consider a pair of family of curves $C^+_z, C^-_z$ such
that they are homologically equivalent as cycles
$$
    C^{\pm}_z\hookrightarrow X_z\cap Y_1\cap Y_2, \ \
    [C^+_z]=[C^-_z]\in H_2(X_z, \mathbb Z)
$$
We denote by $\mathcal C^{\pm}$ he corresponding families
\begin{eqnarray*}
    \xymatrix{
        \mathcal C^\pm \ar[dr]\ar@{^{(}->}[r] & \mathcal X\ar[d]\\
        & S
    }
\end{eqnarray*}
Let $(\mathcal H_{\mathbb Z}^3, F^*\mathcal H_{\mathbb C}^3)$ be the
integral Hodge structure for the family $\mathcal X\to S$, and
$\mathcal J^3$ be the intermediate Jacobian fibration
\begin{eqnarray*}
    \mathcal J^3={\mathcal H_{\mathbb C}^3\over F^2\mathcal H_{\mathbb C}^3\oplus \mathcal H_{\mathbb
    Z}^3}
\end{eqnarray*}
The fiber of $\mathcal J^3$ over $z$ can be identified with
$$
J^3_z={\left( F^2 H^3(X_z, \mathbb C) \right)^*\over H_{3}(X_z,
\mathbb Z)}.
$$
The Abel-Jacobi map associated to $\mathcal C^{\pm}$ is a normal
function  of $\mathcal J^3$ given by
\begin{eqnarray*}
    \int_{C^-_z}^{C^+_z}: S\to \mathcal J^3.
\end{eqnarray*}
Suppose we are given a family of rational $4$-form with pole of order
one along $X_z$
$$
    \omega_z\in H^0(M, K_M(X_z)).
$$
Then we obtain a section $ Res_{X_z} \omega_z$ of $F^3\mathcal
H^3_{\mathbb C}$. The integral that we will study is
$$
    \int_{C^-_{z}}^{C^+_z} Res_{X_z} \omega_z,
$$
which is well-defined up to periods of closed cycles on $X_z$. \\

We can assume that $C_z^+\cup
C_z^-$ moves equisingularly above $S$, for otherwise we shrink $S$ to
some smaller open subset to achieve this. We can always choose local
trivialization of $\mathcal X$ over a small disk around any point in
$S$ which also trivializes $\mathcal C^+\cup \mathcal C^-$.
Therefore it's easy to see that $H^3(X_z, Y_z)$ still forms a local
system in this case by $Y_z=C_z^+\cup C_z^-$ and the discussion in
section one is still valid. Let $\{p_{z,A}\}$ be the singular points
of $Y_1\cap Y_2\cap X_z$ corresponding to non-transversal
intersections. It contains $C^+_z\cap C^-_z$ and possibly some
other points on $C^+_z$ or $C^-_z$ when there are components of
$Y_1\cap Y_2$ other than $C^\pm_z$. {\it We assume that at each $p_{z,A}$,
one of $Y_1, Y_2$ is intersecting transversely with $X_z$.} Therefore
we can always put an $\epsilon$-tube around each of $C^{\pm}_z$ such that the $\epsilon$-tube lies
on $Y_1\cap Y_2$ outside a small disk centered at
each $p_{z, A}\in C^{+}_z\cup C^-_z$, where the $\epsilon$-tube lies in one of $Y_1$ or $Y_1$ around that small disk.

Suppose $\mathcal D=\mathcal D(\pa_z)$ is a Picard-Fuchs operator,
and
\begin{eqnarray*}
    \mathcal D(\pa_z) \omega_z=-d\beta_z
\end{eqnarray*}
Applying (\ref{localization}) and using the fact that $\beta_z$ is
of type $(3,0)$, we get
\begin{eqnarray*}
    \mathcal D(\pa_z) \int_{C^-_z}^{C^+_z} Res_{X_z} {\omega_z}&=&{1\over 2\pi
    i}\int_{T_\epsilon(C^+_z-C^-_z)} \beta_z\\
    &=&\sum_{p_{z, A}\in C^+_z} {1\over 2\pi
    i}\int_{T_\epsilon(D^+_{z,A})} \beta_z-\sum_{p_{z, A}\in C^-_z} {1\over 2\pi
    i}\int_{T_\epsilon(D^-_{z,A})} \beta_z
\end{eqnarray*}
where $D^{\pm}_{z,A}$ is a small disk around $p_{z,A}\in
C^{\pm}_z$ such that $T_{\epsilon}(\pa D^{\pm}_{z,A})\in Y_1\cap
Y_2$. 

Fix a $p_{z, A}\in C^+_z$ and let $P_{z,A}=0$ be the local defining equation for
$X_z$ near $p_{z,A}$. We assume that $T_\epsilon(D^+_{z,A})\subset Y_1$. Then
$\beta_z$ restricts to $Y_1$ becoming a top $(3,0)$-form, and we can write
\begin{eqnarray}
    \beta_z|_{Y_1}=d \eta_{z,A}
\end{eqnarray}
for some local holomorphic $(2,0)$-form $\eta_{z,A}$ on $Y_1$, defined near
$p_{z,A}$ and with poles along $Y_1\cap X_z$. Hence
\begin{eqnarray*}
{1\over 2\pi
    i}\int_{T_\epsilon(D^+_{z,A})} \beta_z&=&-{1\over 2\pi
    i}\int_{T_\epsilon(\pa D^+_{z,A})} \eta_{z,A}\\
    &=&-\int_{\pa D^+_{z,A}} Res_{C^+_{z,A}}
    (\eta_{z,A}|_{Y_1\cap Y_2})\\
    &=&-2\pi i\ Res_{p_{z,A}} Res_{C^+_z}
    \left(\eta_{z,A}|_{Y_1\cap Y_2}\right)
\end{eqnarray*}
Thus we arrive at the following ``localization'' formula
\begin{thm} [Double Residue Formula]
Under the stated assumption above, we have
\begin{eqnarray}
\mathcal D(\pa_z) {1\over 2\pi i}\int_{C^-_z}^{C^+_z}
Res_{X_z}\omega_z&=&-\sum_{p_{z, A}\in C^+_z} Res_{p_{z,A}}
Res_{C^+_z}
    \left(\eta_{z,A}|_{Y_1\cap Y_2}\right)\nonumber\\
    &&+\sum_{p_{z, A}\in C^-_z}  Res_{p_{z,A}} Res_{C^+_z}
    \left(\eta_{z,A}|_{Y_1\cap Y_2}\right)\label{double residue}
\end{eqnarray}
\end{thm}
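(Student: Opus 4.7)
The plan is to begin from the inhomogeneous Picard--Fuchs equation of Proposition \ref{Inhomog}, localize the resulting boundary integral to small neighborhoods of the singular points $p_{z,A}$, and then extract the double residue by applying the Leray residue twice together with a local holomorphic Poincar\'e-type lemma. Concretely, apply (\ref{localization}) to a 3-chain $\Gamma$ with $\pa\Gamma=C^+_z-C^-_z$, choosing the $\epsilon$-tube so that $T_\epsilon$ lies inside $Y_1\cap Y_2$ away from a small disk $D^\pm_{z,A}\subset C^\pm_z$ around each $p_{z,A}$. This gives
$$
\mathcal D(\pa_z)\int_{C^-_z}^{C^+_z}Res_{X_z}\omega_z = \frac{1}{2\pi i}\int_{T_\epsilon(\pa\Gamma)}\beta_z.
$$
The contribution from the smooth pieces $T_\epsilon\bigl(C^\pm_z\setminus\bigcup_A D^\pm_{z,A}\bigr)$ vanishes automatically because $\beta_z$ has type $(3,0)$ while $Y_1\cap Y_2$ has complex dimension $2$, so only the tubes $T_\epsilon(D^\pm_{z,A})$ contribute.

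For each of these remaining pieces, the transversality hypothesis lets us arrange that, say, $T_\epsilon(D^+_{z,A})\subset Y_1$. On $Y_1$ the restriction $\beta_z|_{Y_1}$ is a rational top-degree $(3,0)$-form with poles along $Y_1\cap X_z$, so the holomorphic Poincar\'e lemma furnishes a local primitive $\eta_{z,A}$ with $\beta_z|_{Y_1}=d\eta_{z,A}$. Stokes' theorem on the tube then converts the integral into one over $T_\epsilon(\pa D^+_{z,A})$, which by construction lies in $Y_1\cap Y_2$.

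The final step is a double application of the Leray residue. Since near $p_{z,A}$ we have $C^+_z=Y_1\cap Y_2\cap X_z$, one application converts the tube integral into $(2\pi i)\int_{\pa D^+_{z,A}}Res_{C^+_z}\bigl(\eta_{z,A}|_{Y_1\cap Y_2}\bigr)$, and a second one-variable residue around $p_{z,A}$ on $C^+_z$ produces a further $(2\pi i)\,Res_{p_{z,A}}Res_{C^+_z}\bigl(\eta_{z,A}|_{Y_1\cap Y_2}\bigr)$. Together with the prefactor $(2\pi i)^{-1}$ from (\ref{localization}) and the $(2\pi i)^{-1}$ in the statement, this yields the $-Res_{p_{z,A}}Res_{C^+_z}(\cdots)$ term in (\ref{double residue}); the $C^-_z$ contributions are handled identically and come with the opposite sign inherited from $\pa\Gamma=C^+_z-C^-_z$.

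The main obstacle will be the clean handling of the local geometry at each $p_{z,A}$: one must choose the $\epsilon$-tube and the disks $D^\pm_{z,A}$ compatibly so that the three nested residue loci $Y_1\supset Y_1\cap Y_2\supset C^\pm_z$ really are used in succession, and verify that the local primitive $\eta_{z,A}$ can be taken with poles only along $Y_1\cap X_z$. The bookkeeping of signs---from Stokes and from the orientation of $\pa\Gamma$---and of the accumulated $2\pi i$ factors is routine but must be done with care.
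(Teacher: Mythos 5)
Your proposal reproduces the paper's own proof step for step: apply the localization formula of Proposition \ref{Inhomog}, use the type-$(3,0)$ fact to reduce the tube integral to small disks around the non-transversal points $p_{z,A}$, place each such tube inside $Y_1$ (or $Y_2$) by the transversality hypothesis, take a local primitive $\eta_{z,A}$ of $\beta_z|_{Y_1}$, apply Stokes, and then apply two Leray residues while tracking the accumulated $2\pi i$'s. Your sign and $2\pi i$ bookkeeping also matches the paper's, so this is essentially the same argument.
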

Note that in this double residue formula, we can throw away those
terms in $\eta_{z,A}$ with pole of order one along $X_{z}$. In fact,
since $X_z$ and $Y_1$ intersects transversely locally at $p_{z,A}$,
we can write locally
$$
    \beta_z|_{Y_1}=d(\xi_{z,A})+\alpha_{z,A}
$$
where $\alpha_{z,A}$ has pole of order one along $X_z\cap Y_1$.
Since $d\alpha_{z,A}=0$, we can furthermore write
$$
\alpha_{z,A}=d(\alpha^\prm_{z,A})
$$
where $\alpha^\prm_{z,A}$ is logrithmic along $X_z\cap Y_1$.  Then
$$
    Res_{C_z^+}\alpha^\prm_{z,A}
$$
will be holomorphic on $C_z^+$ around $p_{z,A}$, which is killed by taking another residue at $p_{z,A}$. This
leads to the following algebraic algorithm to calculate $\eta_{z,A}$
in practice: if locally around $p_{z,A}$ inside $Y_1$ we have
$$
    \beta_z|_{Y_1}={d P_{z,A}\over P_{z,A}^l}\wedge \zeta, \ \ \ l>1
$$
where $\zeta$ has no pole along $X_z\cap Y_1$, then we can reduce
the order of pole by
$$
    \beta_z|_{Y_1}=-d\left( {\zeta\over (l-1)P_{z,A}^{l-1}}
    \right)+{d\zeta\over (l-1)P_{z,A}^{l-1}}
$$
and apply this procedure until we get
$$
\beta_z|_{Y_1}= d(\psi_{z,A})+\varphi_{z,A}
$$
where $\psi_{z,A}$ is of the form ${\zeta_{l-1}\over
P_{z,A}^{l-1}}+{\zeta_{l-2}\over
P_{z,A}^{l-2}}+\cdots+{\zeta_{1}\over P_{z,A}}$, and $\varphi_{z,A}$
has pole of order one. Then we can simply use $\psi_{z,A}$ instead
of $\eta_{z_A}$ in (\ref{double residue}) to compute the double residue.

\subsection*{Example: Mirror Quintic}
We will apply formula (\ref{double residue}) to the example studied
in \cite{D-brane Normal Function} where the inhomogeneous Picard-Fuchs equation for the Abel-Jacobi map was
computed by using a much more elaborate analytic method.\\
\\
 Consider the one-parameter family of quintic in $\mathbb P^4$ defined by
\begin{eqnarray}
    X_x=\{P_x= \prod\limits_{i=1}^5 z_i-x^{1/5}\sum\limits_{i=1}^5 z_i^5=0\}
\end{eqnarray}
Let $\omega=\iota_\theta\lrcorner \Omega,\ \Omega=dz_1\wedge
dz_2\wedge\cdots\wedge dz_5,\ \theta=\sum\limits_{i=1}^5 z_i
{\pa\over \pa z_i}$, and the family of holomorphic 3-form on $X_x$
is given by the residue map
$$
            \Omega_x=Res_{P_x} {\omega\over P_x}
$$
and consider a pair of families of curves
$$
    C_{x}^{\pm}=\{z_1+z_2=0, z_3+z_4=0, z_1z_3\pm \sqrt{x^{1/5}}z_5^2=0\}
$$
We denote
$$
Y_1=\{z_1+z_2=0\}, \ \ \ Y_2=\{z_3+z_4=0\}
$$
then $Y_1$ is transversal with $P_x$ except at $p_1$, and $Y_2$ is
transversal with $P_x$ except at $p_2$, where
$$
    p_1=[1,-1,0,0,0], \ \ p_2=[0,0,1,-1,0]
$$
We consider the Picard-Fuchs operator from the GKZ-system and by
(\ref{picard-fuchs})
$$
    D(\pa_x)= \Theta_x^5-x\prod\limits_{i=1}^5(5 \Theta_x+i), \ \ d\beta_x=\sum_{I\subset \{1,\dots, 5\}, |I|\geq 1}{\Theta_x^{5-|I|}\over 5^{|I|}}\prod_{k\in I} (\mathcal L_{z_k\pa_k})\tilde \Pi(x)
$$
We choose $\beta$ to be
\begin{eqnarray*}
    \beta&=&{\Theta_x^3\over 5^2}\left\{ z_1\iota_{\pa_1} \mathcal L_{z_2\pa_2}+ z_3\iota_{\pa_3}\mathcal L_{z_4\pa_4} +(z_3\iota_{\pa_3}+z_4\iota_{\pa_4})(\mathcal L_{z_1\pa_1}+\mathcal L_{z_2\pa_{2}}) \right\}\tilde \Pi(x)\\
    &&+{\Theta_x^2\over 5^3}\left\{(z_3\iota_{\pa_3}+z_4\iota_{\pa_4})\mathcal L_{z_1\pa_1}\mathcal L_{z_2\pa_2} + z_3\iota_{\pa_3}(\mathcal L_{z_1\pa_1}+\mathcal L_{z_2\pa z_2})\mathcal L_{z_4\pa_4} \right\}\tilde \Pi(x)\\
    &&+ {\Theta_x \over 5^4} z_3\iota_{\pa_3}\mathcal L_{z_4\pa_4}\mathcal L_{z_1\pa_1}\mathcal L_{z_2\pa_2} \tilde \Pi(x)
    +\sum_{I\subset \{1,\dots, 4\}, |I|\geq 1}{\Theta_x^{4-|I|}\over 5^{|I|+1}}z_5\iota_{\pa_5}\prod_{k\in I} (\mathcal L_{z_k\pa_k})\tilde \Pi(x)
\end{eqnarray*}
Then the calculation of Abel-Jacobi map is localized near $p_1, p_2$
by formula (\ref{double residue}).\\

\noindent $\bullet$ Near $p_2$. We have
$$
    \beta|_{Y_1}={\Theta_x^3\over 5^2}\left(z_1\iota_{\pa_1} \mathcal L_{z_2\pa_2}\tilde \Pi(x)  \right)|_{Y_1}
$$
Note that we can move $\Theta_x$ outside the integral in the local
calculation of double residue. If we choose local coordinate such
that $z_3=1, z_2=-z_1$, then
\begin{eqnarray*}
\left(z_1\iota_{\pa_1} \mathcal L_{z_2\pa_2}\tilde \Pi(x)  \right)|_{Y_1}&=&-{x^{1/5}z_1(1+z_4^5+z_5^5+5 z_1^5) dz_4dz_1dz_5\over \left[
    -x^{1/5}(1+z_4^5+z_5^5)-z_1^2z_4z_5
\right]^2}\\
&=&d\left( {x^{1/5}z_1(1+z_4^5+z_5^5+5 z_1^5) dz_1\wedge dz_5\over (-5x^{1/5}z_4^4-z_1^2z_5) \left[
    -x^{1/5}(1+z_4^5+z_5^5)-z_1^2z_4z_5
\right]} \right)+\mbox{order one term}
\end{eqnarray*}
Then the relevant residue is given by
\begin{eqnarray*}
    Res_{p_2}Res_{C_z^{\pm}}\left( {x^{1/5}z_1(1+z_4^5+z_5^5+5 z_1^5) dz_1\wedge dz_5\over (-5x^{1/5}z_4^4-z_1^2z_5) \left[
    -x^{1/5}(1+z_4^5+z_5^5)-z_1^2z_4z_5
\right]} \right)|_{z_4=-1}
\end{eqnarray*}
which is easily computed to be zero. \\

\noindent $\bullet$ Near $p_1$. We have
\begin{eqnarray*}
  \beta|_{Y_2}={\Theta_x^3\over 5^2}\left(z_3\iota_{\pa_3}\mathcal L_{z_4\pa_4}\tilde\Pi(x)\right)
  +{\Theta_x^2\over 5^3}d\left((\iota_{z_1\pa_1}+\iota_{z_2\pa z_2})z_3\iota_{\pa_3}\mathcal L_{z_4\pa_4}  \tilde\Pi(x)\right)
+{\Theta_x \over 5^4}
d\left(\iota_{z_1\pa_1}\iota_{z_3\pa_3}\mathcal L_{z_4\pa_4}\mathcal
L_{z_2\pa_2} \tilde \Pi(x)\right)
\end{eqnarray*}
The first term will be zero after double residue by the same calculation as above. The
contribution of the second term is also zero since
$$
\left((\iota_{z_1\pa_1}+\iota_{z_2\pa z_2})z_3\iota_{\pa_3}\mathcal
L_{z_4\pa_4}  \tilde\Pi(x)\right)|_{Y_1\cap Y_2}=0
$$
Therefore we get
\begin{eqnarray*}
    D(\pa_x){1\over 2\pi i}\int_{C^-_x}^{C^+_x} \Omega_x&=&-{\Theta_x\over 5^4}Res_{p_1}
Res_{C^+_z}
   \left(\iota_{z_1\pa_1}\iota_{z_3\pa_3}\mathcal L_{z_4\pa_4}\mathcal
L_{z_2\pa_2} \tilde \Pi(x)\right)|_{Y_1\cap Y_2}\\&&+ {\Theta_x\over
5^4}Res_{p_1} Res_{C^-_z}
   \left(\iota_{z_1\pa_1}\iota_{z_3\pa_3}\mathcal L_{z_4\pa_4}\mathcal
L_{z_2\pa_2} \tilde \Pi(x)\right)|_{Y_1\cap Y_2}
\end{eqnarray*}
The double residue now is easy to compute and we get
\begin{eqnarray*}
Res_{p_1} Res_{C^{\pm}_x}
   \left(\iota_{z_1\pa_1}\iota_{z_3\pa_3}\mathcal L_{z_4\pa_4}\mathcal
L_{z_2\pa_2} \tilde \Pi(x)\right)|_{Y_1\cap Y_2}=\mp {375\over
8}\sqrt{x}
\end{eqnarray*}
therefore
\begin{eqnarray}\label{quintic AJ}
D(\pa_x){1\over 2\pi i}\int_{C^-_x}^{C^+_x} \Omega_x=\Theta_x{3\over
20}\sqrt{x}
\end{eqnarray}

To compare our result with \cite{D-brane Normal Function}, we use
the normalization
\begin{eqnarray*}
    \hat\Omega_x&=&-\left(5\over 2\pi i\right)^3 \Omega_x, \\
     \mathcal
    L&=&\Theta_x^4-5x
    (5\Theta_x+1)(5\Theta_x+2)(5\Theta_x+3)(5\Theta_x+4)\\
    D(\pa_x)&=&\Theta_x\mathcal L
\end{eqnarray*}
then we have
\begin{eqnarray}
    \Theta_x \mathcal L \int_{C^-_x}^{C^+_x}\hat \Omega_x=\Theta_x{75\over 16 \pi^2}\sqrt{x}
\end{eqnarray}
It follows that
$$
\mathcal L \int_{C^-_x}^{C^+_x}\hat \Omega_x={75\over 16
\pi^2}\sqrt{x}+c
$$
for some constant $c$. Since the monodromy $x\to e^{2\pi i}x$ will
switch the curves $C^+_x$ and $C^-_x$, the constant $c$ is zero. We
therefore obtain the inhomogeneous term differs from the result of
\cite{D-brane Normal Function} by a factor of $5$, which is exactly
the order of stabilizer of $C^\pm_x$ under the finite quotient that yields
the mirror quintic.

\section{GKZ System and Picard-Fuchs Equation for Relative Cohomology}
In this section, $W$ will be fixed ambient space,
$X_{z}=\{P_{z}=0\}$ a family of hypersurfaces parametrized by $z$,
and $D_u=\{Q_u=0\}$ a family of hypersurfaces parametrized by $u$. Here $P_z, Q_u$ moves in the linear systems of certain possibly different
fixed 
line bundles on $W$. 
Let $Y_{z,u}\subset X_{z}$ be the intersection $X_{z}\cap D_u$, and
we consider the local system of relative cohomology
$$
    H^3(X_z, Y_{z,u})
$$
parametrized by both $z$ and $u$. Given a rational form $\omega_z$
in $W$ with pole of order one along $X_z$, and choose a smooth
family of relative cycles $\Gamma_{z,u}\in H_3(X_z, Y_{z,u})$. We
will study the Picard-Fuchs equation for the relative period
\begin{eqnarray}
   \Pi(z,u)= \int_{\Gamma_{z,u}} Res_{X_z}\omega_z.
\end{eqnarray}
We can assume that the relative cycle $\Gamma_{z,u}$ is away from any non-transversal intersection point of $X_z, Q_u$, since we can always move the relative cycle away from those points without changing its cohomology class. \\

We first consider the variation with respect to $z$. We choose
$\epsilon$-tube of $\Gamma_{z,u}$ such that $\pa
T_\epsilon(\Gamma_{z,u})$ lies inside $D_u$. As shown in section 2, we have
\begin{eqnarray}\label{differentiation}
\left(\pa_z\right)^k \int_{\Gamma_{z,u}} Res_{X_z}\omega_z
= \left(\pa_z\right)^k {1\over 2\pi i}\int_{T_\epsilon(\Gamma_{z,u})}\omega_z=
{1\over 2\pi i} \int_{T_\epsilon(\Gamma_{z,u})}\left(\pa_z\right)^k \omega_z
\end{eqnarray}
and the above integration is independent of $\epsilon$. Let $\mathcal D_z$ be a Picard-Fuchs differential operator and let
$$
    \mathcal D_z \omega_z=-d\beta_z.
$$
Then formula (\ref{differentiation}) yields
\begin{eqnarray}
    \mathcal D_z \Pi(z,u)&=&
   {1\over 2\pi i} \int_{T_\epsilon(\Gamma_{z,u})} -d\beta_z={1\over 2\pi i}\int_{T_{\epsilon}(\pa\Gamma_{z,u})} \beta_z|_{D_u} \nonumber\\
      &=&\int_{(\pa\Gamma_{z,u})} Res_{Y_{z,u}}\beta_z|_{D_u} =\int_{(\pa\Gamma_{z,u})} Res_{Y_{z,u}} Res_{D_u}d\log Q_u\wedge \beta_z
    \end{eqnarray}
Note that $d\log Q_u
    \wedge \beta_z$ is not well-defined global form, but $Res_{D_u} d\log Q_u
    \wedge \beta_z$ is well-defined.\\

Next we fix $z$ and consider the variation with respect to $u$. Choose a lifting of $\pa\over \pa u$ to $\mathcal
Y$
\begin{eqnarray*}
    {\pa\over \pa u}+ n_{\mathcal Y_{z,u}}
\end{eqnarray*}
where $n_{\mathcal Y_{z,u}}$ is the normal vector field
corresponding to the deformation of $Y_{z,u}$ inside $X_z$ with
respect to $u$. Then using (\ref{variation res}) and keeping the same notation as in section 2, we get
\begin{eqnarray*}
    \pa_u \Pi(z,u)=\int_{\pa \Gamma_{z,u}}\iota_{n_{\mathcal
    Y_{z,u}}}\lrcorner \phi_z|_{X_z}
\end{eqnarray*}
Since
$$
    \pa_u Q_u|_{Y_{z,u}}=-
    \left(\iota_{n_{\mathcal
    Y_{z,u}}}\lrcorner d_M Q_u
    \right)|_{Y_{z,u}}
$$
and $\phi_z$ is holomorphic 3-form, we have
\begin{eqnarray*}
    \left(\iota_{n_{\mathcal
    Y_{z,u}}}\lrcorner
    \phi_z|_{X_z}\right)|_{Y_{z,u}}&=&-Res_{Y_{z,u}}((\pa_u\log
    Q_{u})\phi_z)|_{X_z}\\
    &=&-Res_{Y_{z,u}}Res_{X_z} (\pa_u \log Q_u)
    \omega_z\\
    &=&Res_{Y_{z,u}}Res_{Q_u}(\pa_u \log Q_u)
    \omega_z.
\end{eqnarray*}
It follows that
\begin{eqnarray}
  \pa_u \Pi(z,u)=\int_{\pa{\Gamma_{z,u}}}Res_{Y_{z,u}}Res_{Q_u}(\pa_u \log
  Q_u)
    \omega_z
\end{eqnarray}
Note that $(\pa_u \log
  Q_u)
    \omega_z$ is globally well-defined rational form with pole along
    $X_z\cup D_u$. To summarize, we have
    
\begin{thm}[Variations of Relative Periods] \label{open-close variation} Let $X_z$, $Y_{z,u}$, $\Gamma_{z,u}$, $D_u$, $\mathcal D_z$ and $\beta_z$ be as given above. Then the relative period $\Pi(z,u)= \int_{\Gamma_{z,u}} Res_{X_z}\omega_z$ satisfies the equations
\begin{eqnarray*}
  \pa_z \Pi(z,u)&=&\int_{T_\epsilon(\Gamma_{z,u})}{\pa_z}\omega_z, \ \ \ \mbox{where}\ T_{\epsilon}
  (\pa\Gamma_{z,u})\subset D_{u}=\{Q_u=0\} \\
 \pa_u \Pi(z,u)&=&\int_{\pa{\Gamma_{z,u}}}Res_{Y_{z,u}}Res_{D_u}(\pa_u \log
  Q_u)
    \omega_z\\
    \mathcal D_z \Pi(z,u)&=&
    \int_{\pa\Gamma_{z,u}} Res_{Y_{z,u}}\beta|_{Q_u}=\int_{\pa\Gamma_{z,u}} Res_{Y_{z,u}}Res_{D_u} d\log Q_u
    \wedge \beta_z.
\end{eqnarray*}
\end{thm}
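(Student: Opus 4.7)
The plan is to establish each of the three equations by specializing the general variation formalism of Section 2 to the present two-parameter setting, where only $X_z$ moves in $z$ and only $D_u$ moves in $u$.

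For the first equation, I would start from the tube representation $\int_{\Gamma_{z,u}}\mathrm{Res}_{X_z}\omega_z = \tfrac{1}{2\pi i}\int_{T_\epsilon(\Gamma_{z,u})}\omega_z$ established in Section 2. The key point is that, by the assumption that $\Gamma_{z,u}$ stays away from non-transversal intersection points of $X_z$ with $Q_u$, one may choose the $\epsilon$-tube to have $\pa T_\epsilon(\Gamma_{z,u})\subset D_u$; since $D_u$ does not depend on $z$, the tube can be held fixed as $z$ varies. By the Stokes-theorem argument already given in Section 2 (independence from $\epsilon$ when the boundary condition is imposed), differentiation in $z$ commutes with the tube integral, giving the first formula immediately, and iterating yields the $\pa_z^k$ version used in the proof of the third equation.

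For the third equation, substitute the defining relation $\mathcal D_z\omega_z = -d\beta_z$ into the $k$-fold iteration of the first equation. Stokes' theorem transfers the integral to $T_\epsilon(\pa\Gamma_{z,u})\subset D_u$, and then the tube-to-residue identification along $D_u$ converts $\tfrac{1}{2\pi i}\int_{T_\epsilon(\pa\Gamma_{z,u})}\beta_z$ into $\int_{\pa\Gamma_{z,u}}\mathrm{Res}_{Y_{z,u}}(\beta_z|_{D_u})$. The alternative expression with $\mathrm{Res}_{D_u}(d\log Q_u\wedge\beta_z)$ is just the standard rewriting of $\beta_z|_{D_u}$ as an iterated residue, using that $d\log Q_u\wedge\beta_z$ has a simple pole along $D_u$.

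For the second equation, I would apply the Residue Variation Formula (Proposition on variation of residues) with the deformation parameter being $u$. Since $\mathcal X$ is fixed in $u$, the normal vector field $n_{z,\mathcal X}$ vanishes and only the $\mathcal Y$-contribution survives. Writing the lift of $\pa_u$ to $\mathcal Y$ as $\pa_u + n_{\mathcal Y_{z,u}}$ and using (\ref{variation res}), one obtains $\pa_u\Pi(z,u) = \int_{\pa\Gamma_{z,u}} \iota_{n_{\mathcal Y_{z,u}}}\lrcorner\phi_z|_{X_z}$. The main technical step is then to convert this contraction into a double residue: using the local identity $\iota_{n_{\mathcal Y_{z,u}}}\lrcorner d_M Q_u = -\pa_u Q_u$ on $Y_{z,u}$ together with $\phi_z|_{X_z} = \mathrm{Res}_{X_z}\omega_z$, the contraction is rewritten as $-\mathrm{Res}_{Y_{z,u}}((\pa_u\log Q_u)\phi_z)|_{X_z} = \mathrm{Res}_{Y_{z,u}}\mathrm{Res}_{D_u}(\pa_u\log Q_u)\omega_z$ (the sign flip coming from swapping the order of residues along the transversal pair $X_z, D_u$).

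The main obstacle is the careful bookkeeping of signs and of the commuting residues at $Y_{z,u} = X_z\cap D_u$: one must verify that $(\pa_u\log Q_u)\omega_z$ is a globally defined rational form with simple poles along $X_z\cup D_u$ (so the iterated residue makes sense and is independent of the order), and that the transversality assumption on $\Gamma_{z,u}$ guarantees $\pa\Gamma_{z,u}$ avoids the locus where $X_z$ and $D_u$ fail to meet transversally. Once this is in place, the three formulas assemble exactly as displayed in the statement.
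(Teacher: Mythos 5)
Your proposal follows the paper's own derivation step by step: the first and third equations are obtained exactly as in the paper by the $\epsilon$-tube representation, $\epsilon$-independence under the boundary condition $\partial T_\epsilon(\Gamma_{z,u})\subset D_u$, Stokes' theorem, and the tube-to-residue conversion along $D_u$; and the second equation is obtained by applying the Residue Variation Formula with the $\mathcal X$-component vanishing (since $X_z$, $\omega_z$, $\phi_z$ are $u$-independent) and rewriting $\iota_{n_{\mathcal Y_{z,u}}}\lrcorner\phi_z$ as the iterated residue of $(\partial_u\log Q_u)\,\omega_z$ with the sign flip from swapping the inner residue from $X_z$ to $D_u$. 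This is the same argument as the paper's, so no further comparison is needed.
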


\subsection*{Griffith-Dwork Reduction}
Notice that if we pick a fixed $Q_0$, then we can formally write
\begin{eqnarray*}
    Res_{D_u}d\log Q_u
    \wedge \beta_z&=&Res_{D_u}\left( d\left( \log\left({Q_u\over Q_0}\right)\ \beta_z \right)-\log {Q_u\over Q_0}\
    d\beta_z\right)\\&=&Res_{D_u}\left(d\left( \log\left({Q_u\over Q_0}\right)\ \beta_z \right)+\log\left({Q_u\over Q_0}\right)\ \mathcal D_z
    \omega_z\right)
\end{eqnarray*}
then we get
\begin{eqnarray*}
\mathcal D_z \Pi(z,u)&=&\int_{\pa\Gamma_{z,u}}
Res_{Y_{z,u}}Res_{D_u}\mathcal D_z\left( \log\left({Q_u\over
Q_0}\right)\ \mathcal
    \omega_z
\right)\\
\pa_u \Pi(z,u)&=&\int_{\pa\Gamma_{z,u}} Res_{Y_{z,u}}Res_{D_u}
\pa_u\left( \log\left({Q_u\over Q_0}\right)\ \mathcal
    \omega_z
\right)
\end{eqnarray*}
Therefore one can derive a Picard-Fuchs equation for $\Pi(z,u)$
using Griffith-Dwork reduction procedure \cite{Rational Integral} by
starting with $\log\left({Q_u\over Q_0}\right)\ \mathcal
    \omega_z$. This procedure of adding Log-$Q$ is proposed in physics
   recently \cite{Masoud}. The theorem above shows that such a procedure is mathematical justified. We
   refer to \cite{Masoud} for applications of this procedure to specific examples.
   
\subsection*{Enhanced Polytope and GKZ System}

We now apply the method developed in the preceding section to study toric B-branes on toric
Calabi-Yau hypersurfaces. For physical motivations of B-brane geometry
on Calabi-Yau manifolds and their mirror transformation, see
\cite{Mina Vafa}. The GKZ-system associated with an enhanced polytope as a way to understand toric B-brane geometry is proposed in \cite{Mayr}.
We will give a uniform derivation of the general enhanced GKZ system by using the variation formula we have developed earlier.\\

We consider Calabi-Yau hypersurfaces in four dimensional toric variety $M$. The results extend easily to arbitrary dimensions. As in section 2, we assume that the anti-canonical sections $H^0(M,K_M^{-1})$ has a monomial basis in toric coordinates $X_i$, whose exponents are exactly the set of integral points of an integral polytope $\Delta^*$.  
The  defining equation for a Calabi-Yau hypersurface $X_a^*$ in $M$ in the toric coordinates has the form
\begin{eqnarray}
    f_{\Delta^*}(a)=\sum_{v_i^*\in \Delta^*} a_i
    X^{v_i^*}
\end{eqnarray}
where $v_0^*$ is the origin by convention, and the relevant rational
form with pole along the hypersurface is given by
$$
    \omega_a={1\over f_{\Delta^*}(a)}\prod_{i=1}^4 {dX_i\over X_i}
$$
Let
$$
    L=\{ l=(l_0,l_1,...)\in \mathbb Z^{|\Delta^*|} | \sum_i l_i \bar v_i^*=0 \}, \ \ \mbox{where}\ \ \bar v_i^*=(1,v_i^*)
$$
be the relation lattice. Next we briefly recall the so-called B-brane geometry introduced in physics.
 In order to describe the so-called superpotential in GLSM \cite{phases}\cite{mirror symmetry}\cite{Mina Vafa}, one introduces an extra variable $P$, and let 
$$
 \bar X=(P, X)=(P, X_1, X_2, X_3, X_4).
$$
Obviously we have 
\begin{eqnarray}
    \sum_{i}l_i=0, \ \ \prod_{v_i^*\in
    \Delta^*}\left(\bar X^{\bar v_i^*}\right)^{l_i}=1, \ \ l\in L.
\end{eqnarray}
The lattice $L$ (denoted as  ``$Q$'' in \cite{Mina Vafa}) is thought of as the toric data that encodes the Calabi-Yau geometry. The ``brane'' data is  specified by one additional lattice vector {\it not} in $L$:
\begin{eqnarray}
    q=\{q_i\}_{v^*_i\in \Delta^*}\in\mathbb Z^{|\Delta^*|}, \ \ \sum_{i}q_i=0.
\end{eqnarray}
(In principle, one can consider a brane configuration involving more than one lattice vectors. But for simplicity we will consider the case of only one such vector.) The B-brane corresponding to a choice of B-brane vector $q$ is the divisor $Y_{a,b}^*=X_a^*\cap D_b$, where $D_b$ is the closure in $M$ of the locus
\begin{eqnarray}
 h(b):=b_0+b_1\prod_{v_i^*\in \Delta^*} \left(X^{v_i^*}\right)^{q_i}=0.
\end{eqnarray}
Here $[b_0,b_1]$ is called the open string modulus. Under open string mirror symmetry, the B-model data $(L,q)$ corresponds to certain A-model data that includes a special lagrangian cycle and a mirror manifold \cite{Mina Vafa}. 
\\

Next, we describe Picard-Fuchs equations for the relative periods of the family of pairs $(X_a^*,Y_{a,b}^*)$. Almost the entire {\it general theory} of periods in closed string mirror symmetry \cite{HKSY}\cite{Hosono-Lian} turns out to carry over to the current open string context, with few modifications.
For a given relative cycle $\Gamma_{a,b}\in H_3(X_a^*,Y_{a,b}^*)$, the corresponding relative period is
\begin{eqnarray}
    \Pi(a,b)=\int_{\Gamma_{a,b}} {1\over f_{\Delta^*}(a)}\prod_{i=1}^4 {dX_i\over X_i}. 
\end{eqnarray}
Consider the automorphism on $M$ given by the torus action
$$
    \phi(\lambda): X_k\to \lambda_k X_k, \ k=1,..,4.
$$
This induces a group action on the family of pairs $(X_a,Y_{a,b})$, hence on the parameter space. The
induced transformation on the parameters $(a,b)$ is given by
$$
  a_i\to \left(\prod_{k=1}^4 \lambda_k^{v_{i,k}^*} \right)a_i, \
  b_0\to b_0, \ b_1\to \left(\prod_{v_i^*\in \Delta^*}\left(\prod_{k=1}^4 \lambda_k^{v^*_{i,k}}
  \right)^{q_i}\right)b_1.
$$
Since the relative periods are invariant under the transformation, it follows that
\begin{eqnarray}
    \Pi\left(\left(\prod_{k=1}^4 \lambda_k^{v_{i,k}^*} \right)a_i,b_0,\left(\prod_{v_i^*\in \Delta^*}\left(\prod_{k=1}^4 \lambda_k^{v^*_{i,k}}
  \right)^{q_i}\right)b_1\right)=\Pi(a,b)
\end{eqnarray}
or equivalently
\begin{eqnarray}
       \sum_{v_i^*\in \Delta^*} v^*_{i,k}\left(a_i{\pa\over \pa
       a_i}+q_ib_1{\pa\over \pa b_1}\right) \Pi(a,b)=0, \ \ k=1,..,4.
\end{eqnarray}
Put
\begin{eqnarray}\label{Lk}
    \mathcal L_k&=&\sum_{v_i^*\in \Delta^*} \bar v^*_{i,k}\left(a_i{\pa\over \pa
       a_i}+q_ib_1{\pa\over \pa b_1}\right)-\beta_k,  \ k=0,1,..,4
\end{eqnarray}
where $\beta=(-1,0,0,0,0)$. Then we get
\begin{eqnarray}
    \mathcal L_k \Pi(a,b)=0
\end{eqnarray}
 Given $l\in L$, let
$$
    \mathcal D_l=\prod_{l_i>0}\left({\pa\over \pa a_i}\right)^{l_i}-\prod_{l_i<0}\left({\pa\over \pa a_i}\right)^{-l_i}
$$
since $\mathcal D_l\left({1\over f_{\Delta^*}(a)}\prod_{i=1}^4
{dX_i\over X_i}\right)=0 $ is now exact equation, it follows from
Theorem \ref{open-close variation} that
\begin{eqnarray}
    \mathcal D_l \Pi(a,b)=0
\end{eqnarray}
There is another set of differential equation given by variation of
the open string modulus. By Theorem \ref{open-close variation} again,
\begin{eqnarray}
    {\pa\over \pa b_0}\Pi(a,b)&=&\int_{\pa{\Gamma_{a,b}}}Res_{Y_{a,b}}Res_{D_b}
 {1\over f_{\Delta^*}(a)h(b)}\prod_{i=1}^4 {dX_i\over  X_i}\label{b0Pi}\\
{\pa\over \pa
b_1}\Pi(a,b)&=&\int_{\pa{\Gamma_{a,b}}}Res_{Y_{a,b}}Res_{D_b}
{\prod_i\left(X^{v_i^*}\right)^{q_i}\over f_{\Delta^*}(a)h(b)}\prod_{i=1}^4 {dX_i\over  X_i}\label{b1Pi}
\end{eqnarray}
It follows that
\begin{eqnarray}\label{homogeneous}
    \left(b_0{\pa\over \pa b_0}+b_1{\pa\over \pa b_1}\right)\Pi(a,b)=0.
\end{eqnarray}
This can also be seen as a consequence of the invariance of the divisor $D_b$ under rescaling $b_0\to
\lambda b_0, b_1\to \lambda b_1$. 

Next, to get a full set of equations governing the relative periods, we introduce the following \cite{Mayr}.
Given the vector $q$ above, the enhanced polytope $\hat\Delta^*$ is the convex hull of the following points in $\mathbb Z^{5}$:
\begin{eqnarray}
    \hat v_i^*=(v_i^*;0), v_i^*\in \Delta^*, \ \ \ \
    w_0=(0;1), \ w_1=(\sum_{v_i^*\in \Delta^*}q_i v_i^*;1).
\end{eqnarray}
Consider their images under the map $\mathbb Z^5\to\mathbb Z^6$, $w\mapsto \bar w=(w,1)$:
\begin{eqnarray}
    \bar{\hat v}_i^*=(1;v_i^*;0), v_i^*\in \Delta^*, \ \ \ \
    \bar w_0=(1;0;1), \ \bar w_1=(1;\sum_{v_i^*\in \Delta^*}q_i v_i^*;1)
\end{eqnarray}
and define the enhanced relation lattice by
\begin{eqnarray}
    \hat L=\left\{\hat l=(\hat l^c; \hat l^o)\in \mathbb Z^{|\Delta^*|+2}|\sum_{v_i^*\in
    \Delta^*}\hat l^c_i \bar{\hat v}_i^* + \sum_{k=0,1}\hat l^o_k \bar w_k=0\right\}.
\end{eqnarray}
(The subscript $"c"$, $"o"$ stand for "closed" and "open"
respectively.) 

\begin{prop} (Enhanced GKZ System)
Put
\begin{eqnarray*}
     \hat{\mathcal L}_k &=&
    \sum_{v_i^*\in \Delta^*}\bar{\hat v}^*_{i,k} a_i{\pa\over \pa a_i}+\bar w_{0,k} b_0{\pa \over \pa b_0}+\bar w_{1,k}b_1{\pa\over \pa b_1}-\hat\beta_k, \ \ \ \ \ k=0,..,5\\
        \hat{\mathcal D}_{\hat l}&=&  \prod_{l_i^c>0}\left({\pa\over \pa
    a_i}\right)^{l^c_i}\prod_{l_k^o>0}\left({\pa\over \pa
   b_k}\right)^{l_k^o}-
   \prod_{l_i^c<0}\left({\pa\over \pa
    a_i}\right)^{-l^c_i}\prod_{l_k^o<0}\left({\pa\over \pa b_k}\right)^{-l_k^o}, \ \ \hat l\in \hat L
\end{eqnarray*}
where $\hat \beta=(-1,0,0,0,0,0)$. Then the relative periods
$\Pi(a,b)$ are solutions to the following enhanced GKZ system:
\begin{eqnarray}
 \hat{\mathcal L}_k \Pi(a,b)&=&0,\ \ \ \  k=0,..,5 \\
  \hat{\mathcal D}_{\hat l}\Pi(a,b)&=&0, \ \ \ \ \hat l\in \hat L \label{D-system}
\end{eqnarray}
\end{prop}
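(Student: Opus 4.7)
The plan is to split the system into the linear operators $\hat{\mathcal L}_k$, which arise from torus invariance and dimensional scaling, and the ``box'' operators $\hat{\mathcal D}_{\hat l}$, which require the residue-tube formalism of Section~2. The linear part reduces to identities already derived in this section. For the box operators I will lift both sides of $\hat{\mathcal D}_{\hat l}\Pi=0$ to identities of meromorphic forms under a suitable double Leray tube and match them using the defining relations of the enhanced lattice $\hat L$.

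For the linear part, first observe that $\hat{\mathcal L}_k$ for $k=1,\ldots,4$ coincides exactly with $\mathcal L_k$ from (\ref{Lk}), already shown to annihilate $\Pi$ by invariance under the torus action $\phi(\lambda)$. The operator $\hat{\mathcal L}_5 = b_0\pa_{b_0}+b_1\pa_{b_1}$ is exactly the homogeneity equation (\ref{homogeneous}) under $(b_0,b_1)\mapsto(\lambda b_0,\lambda b_1)$. Finally $\hat{\mathcal L}_0 = \sum_i a_i\pa_{a_i}+b_0\pa_{b_0}+b_1\pa_{b_1}+1$, and subtracting $\hat{\mathcal L}_5$ leaves $\sum_i a_i\pa_{a_i}+1$, which is $\mathcal L_0$ of (\ref{Lk}) (using $\sum_i q_i=0$) and records the overall $-1$ homogeneity of $\Pi$ in the $a$-variables coming from the single $1/f_{\Delta^*}(a)$ factor in $\omega_a$.

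For the box operators, decompose the lattice relation $\sum_i \hat l^c_i\bar{\hat v}^*_i + \hat l^o_0\bar w_0 + \hat l^o_1\bar w_1=0$ into components: $\sum_i\hat l^c_i+\hat l^o_0+\hat l^o_1=0$, $\sum_i \hat l^c_i v^*_i + \hat l^o_1\sum_i q_i v^*_i=0$, and $\hat l^o_0+\hat l^o_1=0$. The first and last force $\sum_i\hat l^c_i=0$, so $K:=\sum_{l^c_i>0}l^c_i=\sum_{l^c_i<0}(-l^c_i)$. When $\hat l^o=0$, $\hat l^c\in L$ and $\hat{\mathcal D}_{\hat l}=\mathcal D_{\hat l^c}$ is the closed-string GKZ operator; a direct calculation $\prod_i\pa_{a_i}^{|l^c_i|}(1/f_{\Delta^*})=(-1)^K K!\,X^{\sum|l^c_i|v^*_i}/f_{\Delta^*}^{K+1}$, together with $\sum l^c_i v^*_i=0$, shows $\mathcal D_{\hat l^c}\omega_a=0$ on the nose, and the equation for $\Pi$ follows by pulling the derivatives inside the tube integral as in (\ref{differentiation}). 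For $\hat l^o\ne 0$, interchanging positive and negative parts if needed, I may assume $N:=\hat l^o_1>0$ (so $\hat l^o_0=-N$). From Theorem \ref{open-close variation},
\begin{equation*}
\pa_{b_0}\Pi=\frac{1}{(2\pi i)^2}\int_{TT_\epsilon(\pa\Gamma)}\frac{1}{fh}\prod_{i=1}^4\frac{dX_i}{X_i},\qquad \pa_{b_1}\Pi=\frac{1}{(2\pi i)^2}\int_{TT_\epsilon(\pa\Gamma)}\frac{X^{\sum_i q_i v^*_i}}{fh}\prod_{i=1}^4\frac{dX_i}{X_i},
\end{equation*}
where $TT_\epsilon(\pa\Gamma)$ is the double Leray tube in $M\setminus(X_a\cup D_b)$ wrapping $X_a$ and $D_b$ around $\pa\Gamma$. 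Using $\pa_{a_i}f=X^{v^*_i}$, $\pa_{b_0}h=1$, $\pa_{b_1}h=X^{\sum_i q_iv^*_i}$, and iterating the derivatives inside the integral, I obtain
\begin{align*}
\prod_{l^c_i>0}\pa_{a_i}^{l^c_i}\pa_{b_1}^N\Pi &= \frac{(-1)^{N+K-1}(N-1)!K!}{(2\pi i)^2}\int_{TT_\epsilon}\frac{X^{\sum_{l^c_i>0}l^c_i v^*_i+N\sum_i q_i v^*_i}}{f^{K+1}h^N}\prod_{i=1}^4\frac{dX_i}{X_i}, \\
\prod_{l^c_i<0}\pa_{a_i}^{-l^c_i}\pa_{b_0}^N\Pi &= \frac{(-1)^{N+K-1}(N-1)!K!}{(2\pi i)^2}\int_{TT_\epsilon}\frac{X^{\sum_{l^c_i<0}(-l^c_i) v^*_i}}{f^{K+1}h^N}\prod_{i=1}^4\frac{dX_i}{X_i}.
\end{align*}
The two exponents of $X$ differ precisely by $\sum_i\hat l^c_i v^*_i+N\sum_i q_i v^*_i$, which vanishes by the middle-component relation; hence the two integrals coincide and $\hat{\mathcal D}_{\hat l}\Pi=0$.

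The most delicate technical point I anticipate is the justification for commuting iterated $\pa_{a_i}$'s and $\pa_{b_k}$'s past the double-tube integral. As in Section~2, $TT_\epsilon(\pa\Gamma)$ lies in $M\setminus(X_a\cup D_b)$ and can be chosen uniformly for small perturbations of $(a,b)$, provided $\pa\Gamma$ stays away from the non-transversal locus of $X_a\cap D_b$; this is precisely the standing assumption on $\Gamma_{a,b}$, and it is what stabilizes the tube under deformation so that differentiation commutes with integration.
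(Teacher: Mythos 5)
Your proof is correct, and it takes a slightly different route from the paper on the box operators. Your treatment of the linear operators $\hat{\mathcal L}_k$ matches the paper's exactly. For $\hat{\mathcal D}_{\hat l}$, you compute both monomial terms applied to $\Pi$ explicitly as double Leray tube integrals, differentiating $\pa_{b_0}\Pi$ and $\pa_{b_1}\Pi$ iteratively and matching the two resulting rational forms via the middle component $\sum_i l^c_i v^*_i + N\sum_i q_i v^*_i = 0$ of the lattice relation. The paper instead applies the full operator $\hat{\mathcal D}_{\hat l}$ to $\pa_{b_0}\Pi$ and $\pa_{b_1}\Pi$ (whose integration cycles are closed, so the operator passes under the integral and annihilates the rational integrand $\frac{1}{fh}\prod dX_i/X_i$), concluding $\pa_{b_0}(\hat{\mathcal D}_{\hat l}\Pi)=\pa_{b_1}(\hat{\mathcal D}_{\hat l}\Pi)=0$; it then observes that $\hat{\mathcal D}_{\hat l}\Pi$ is homogeneous of degree $-|l^o_0|$ in $(b_0,b_1)$, so for $l^o\neq0$ it must vanish, while the $l^o=0$ case reduces to the closed-string identity $\mathcal D_{l^c}\omega_a=0$. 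The underlying algebraic identity (the two monomial derivatives of $1/(fh)$ agree by the lattice relation) is the same in both arguments; the paper's homogeneity trick avoids writing out the explicit Gamma-like coefficients $(-1)^{N+K-1}(N-1)!K!$ that you compute, while your version is more concrete and proves $\hat{\mathcal D}_{\hat l}\Pi=0$ directly without the "integrate back up" step. Your acknowledgment of the stability of the double tube under deformation is also exactly the point the paper flags when it says the cycles are closed so differentiation commutes with integration.
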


\begin{proof}
The first order equations are simply restating (\ref{Lk}) and (\ref{homogeneous}). 
Note that in (\ref{b0Pi}) and (\ref{b1Pi}) the integrals are over exact, hence closed cycles. Thus there is no contributions from the variations of $a,b$ when we differentiate them with respect to the $a,b$. It follows that when the operator $\hat{\mathcal D}_{\hat l}$ is applied to them, we can interchange the order of the operator and integration. Since the operator kills the integrand, it follows easily that
\begin{eqnarray}
    \hat {\mathcal D}_{\hat l}{\pa\over \pa b_0}\Pi(a,b)=\hat{\mathcal D}_{\hat l}{\pa\over \pa b_1}\Pi(a,b)=0.
\end{eqnarray}
Since $\hat D_{\hat l}\Pi(a,b)$ is homogeneous of degree $-|l_0^o|$ in $b_0, b_1$, this equation implies that
\begin{eqnarray}
    \hat D_{\hat l}\Pi(a,b)=0.
\end{eqnarray}
\end{proof}

We now construct a general solution to the enhanced GKZ system above. The main idea grew out of early attempts to generalize the known examples from mirror symmetry (see for e.g. \cite{Batyrev}.) One of the main problems in closed string mirror symmetry for toric hypersurfaces (and complete intersections) in a complete toric variety was to construct the so-called large radius limit and all analytic solutions to the extended GKZ system  \cite{HKSY}\cite{Hosono-Lian} with $\beta=(-1,0,..,0)$.
It was proved \cite{HLY} that if the projective toric variety $\mathbb P_\Sigma$ is semi-positive, then a large radius limit corresponds exactly to the canonical triangulation associated to the fan of $\mathbb P_\Sigma$, and that the unique powers series solution is given by a Gamma series. To get other analytic solutions, our idea was then to {\it deform} this Gamma series, as in eqn. (3.5) of \cite{Hosono-Lian}. For by the analyticity of Gamma function, the deformed Gamma series is a priori a solution to the GKZ system modulo a set of relations in the deformation parameters $D_0,..,D_p$. Now if all the relations lie in the ring $\mathbb C[D_0,..,D_p]$, then it follows easily that the deformed Gamma series would be a solution modulo the Stanley-Reisner ideal of a maximal triangulation. It was then shown that the semi-positivity condition on $\mathbb P_\Sigma$ guarantees that the relations lie in the appropriate ring. The Stanley-Reisner ring is the cohomology ring of $\mathbb P_\Sigma$, and a complete set of solutions is parameterized by it \cite{Hosono-Lian}. Similar problems for general point configurations in $\mathbb Z^n$ has also been studied \cite{Adolphson}\cite{resonant}.

\ \

We now formulate our general solutions to the enhanced GKZ system. We will show that the results on closed string mirror symmetry discussed above essentially carries over to the open string context with some modifications.  
We begin with a brief review of the general setup. Fix a possibly incomplete simplicial fan $\Sigma$ in $\mathbb Z^n$ such that every maximal cone is $n$ dimensional and that the support $|\Sigma|$ of $\Sigma$ is convex. Let $\Sigma(1)=\{\mu_1,..,\mu_p\}$ be the set of integral generators of the 1-cones, and put $\mu_0=0$ and $\nabla=conv(\Sigma(1)\cup 0)$. 
The Stanley-Reisner ideal of the fan $\Sigma$ is denoted by $SR_\Sigma\subset\mathbb C[D_1,..,D_p]$.  Put $\mathcal A=\{\bar\mu_0,..,\bar\mu_p\}\subset(1,\mathbb Z^n)$ with $\bar w=(1,w)$, and let $L\subset\mathbb Z^{p+1}$ be the lattice of relations of $\mathcal A$. 
We shall {\it assume that the first Chern class $c_1(\mathbb P_\Sigma)$ of the toric variety $\mathbb P_\Sigma$ is semi-positive.} 

\begin{prop}
For each $n$-cone $\sigma$ in $\Sigma$, $\sigma\cap\nabla$ is an $n$-simplex whose vertices are $\mu_0$ and the $n$ generators of $\sigma$. Such simplices $\sigma\cap\nabla$ form a triangulation $\mathcal T_\Sigma$ of $\nabla$.
\end{prop}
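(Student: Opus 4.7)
The plan is to use the semi-positivity of $c_1(\mathbb P_\Sigma)$ via the associated piecewise linear support function $\varphi:|\Sigma|\to\mathbb R$ characterized by $\varphi(\mu_i)=1$ for $i=1,\ldots,p$ and linearity on each cone. Semi-positivity of $-K_{\mathbb P_\Sigma}=\sum_{i=1}^p D_{\mu_i}$ is exactly equivalent to $\varphi$ being convex on $|\Sigma|$, and this is the property that drives everything.

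Fix an $n$-cone $\sigma\in\Sigma$ with generators $\mu_{i_1},\ldots,\mu_{i_n}$ (linearly independent since $\Sigma$ is simplicial and $\sigma$ is $n$-dimensional), and let $S_\sigma:=\mathrm{conv}(\mu_0,\mu_{i_1},\ldots,\mu_{i_n})$. First I would observe that $S_\sigma\subseteq\sigma\cap\nabla$ is immediate, since each vertex lies in both $\sigma$ and $\nabla$. For the reverse inclusion, take $x\in\sigma\cap\nabla$ and write $x=\sum_{j=1}^n t_j\mu_{i_j}$ with $t_j\geq 0$ (valid because $x\in\sigma$), and also $x=\sum_{k=0}^p s_k\mu_k$ with $s_k\geq 0$, $\sum_k s_k=1$ (valid because $x\in\nabla$). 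Since $\varphi$ is linear on $\sigma$ with $\varphi(\mu_{i_j})=1$, we have $\varphi(x)=\sum_j t_j$. On the other hand, convexity of $\varphi$ on $|\Sigma|$ together with $\varphi(\mu_0)=0$ and $\varphi(\mu_k)=1$ for $k\geq 1$ gives
\begin{equation*}
\varphi(x)\ \leq\ \sum_{k=0}^p s_k\,\varphi(\mu_k)\ =\ \sum_{k=1}^p s_k\ \leq\ 1.
\end{equation*}
Hence $\sum_j t_j\leq 1$, which places $x$ in $S_\sigma$. This proves $\sigma\cap\nabla=S_\sigma$, and linear independence of the $\mu_{i_j}$ makes it a genuine $n$-simplex.

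For the triangulation assertion, I would argue coverage and the face-to-face property separately. Coverage is cheap: every $\mu_i\in|\Sigma|$, and $|\Sigma|$ is convex by assumption, so $\nabla\subseteq|\Sigma|=\bigcup_\sigma\sigma$ (union over $n$-cones, using that every maximal cone is $n$-dimensional), whence $\nabla=\bigcup_\sigma(\sigma\cap\nabla)=\bigcup_\sigma S_\sigma$. For the face-to-face property, let $\sigma,\sigma'$ be two $n$-cones and $\tau=\sigma\cap\sigma'$ their common face in $\Sigma$, with generators indexed by some subset. Then
\begin{equation*}
S_\sigma\cap S_{\sigma'}\ =\ (\sigma\cap\nabla)\cap(\sigma'\cap\nabla)\ =\ \tau\cap\nabla,
\end{equation*}
and since $\tau\subseteq\sigma$, this equals $\tau\cap S_\sigma$, which is precisely the face of the simplex $S_\sigma$ spanned by $\mu_0$ together with the generators of $\tau$. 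By symmetry, this is also the analogous face of $S_{\sigma'}$, so the two simplices meet in a common face.

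The main obstacle is the first inclusion $\sigma\cap\nabla\subseteq S_\sigma$: without semi-positivity, a point of $\nabla$ lying in $\sigma$ could have $\sum_j t_j>1$ (for instance, if another $\mu_k$ sticks out past the hyperplane $\{\varphi=1\}$ restricted to $\sigma$). Convexity of $\varphi$ is exactly what forbids this, by forcing $\varphi\leq 1$ throughout $\nabla$. Everything else — coverage, face compatibility, and that $S_\sigma$ is a simplex — follows from the combinatorics of a simplicial fan with $\mu_0$ adjoined.
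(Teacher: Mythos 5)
Your proof is correct, and while it rests on the same key mechanism as the paper's — semi-positivity of $c_1(\mathbb P_\Sigma)$ is equivalent to convexity of the piecewise linear function $\alpha_\Sigma$ with $\alpha_\Sigma(\mu_i)=1$ for $i\geq1$ — it handles the critical step differently. The paper cites Theorem 4.10 of \cite{HLY} to conclude that every $\mu_i$ lies on $\partial\nabla$ and lets the simplex identity $\sigma\cap\nabla=\mathrm{conv}(\mu_0,\mu_{i_1},\ldots,\mu_{i_n})$ follow from that geometric fact; you instead prove the inclusion $\sigma\cap\nabla\subseteq S_\sigma$ directly by a Jensen-type estimate: writing $x\in\sigma\cap\nabla$ both as a nonnegative combination of the generators of $\sigma$ and as a convex combination of all $\mu_k$, you get $\sum_j t_j=\alpha_\Sigma(x)\leq\sum_k s_k\alpha_\Sigma(\mu_k)\leq 1$. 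This is more elementary and self-contained, and it also makes transparent exactly where semi-positivity enters (your remark about a $\mu_k$ sticking past the hyperplane $\{\alpha_\Sigma=1\}\cap\sigma$ is precisely the failure mode that convexity excludes). For the triangulation claim you spell out the face-to-face property $S_\sigma\cap S_{\sigma'}=\tau\cap\nabla=\tau\cap S_\sigma$ where $\tau=\sigma\cap\sigma'$, and verify this is the common face spanned by $\mu_0$ and the generators of $\tau$; the paper leaves this as an ``easy'' consequence of $\Sigma$ being a fan. Both arguments are valid; yours is the more explicit and avoids the external reference, while the paper's is quicker given the HLY machinery.
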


\proof
The class $c_1(\mathbb P_\Sigma)$ is represented by  the piecewise linear function $\alpha_\Sigma$ with value $1$ at each $\mu_1,..,\mu_p$.  That $c_1(\mathbb P_\Sigma)$ is semi-positive means that $\alpha_\Sigma$ is convex.
The argument of Theorem 4.10 \cite{HLY} applied to $\Sigma$ shows that each $\mu_i$ must be on the boundary of $\nabla$.
This implies that $\sigma\cap\nabla$ is an $n$-simplex whose vertices are $\mu_0$ and the $n$  generators of $\sigma$.
Since the support $|\Sigma|$ is convex, such $n$-simplices must fill up all of $\nabla$. That they form a triangulation follows easily from that $\Sigma$ is a fan. $\Box$

\ \

The triangulation of $\nabla$ above gives $conv(\mathcal A)=(1,\nabla)$ a triangulation, which we also denote by $\mathcal T_\Sigma$.
 We now consider the GKZ $\mathcal A$-hypergeometric system with $\beta=(-1,0,..,0)$. Let $C(\mathcal T_\Sigma)\subset L^*$ be the cone of $\mathcal T_\Sigma$ convex piecewise linear function modulo linear functions. Consider the following deformed Gamma series (cf.  eqn. (3.5) of \cite{Hosono-Lian}): 
\begin{eqnarray}
B_\Sigma(a)={1\over a_0}\sum_{l\in C(\mathcal T_\Sigma)^\vee}
\frac{\Gamma(-l_0-D_0+1)}{\prod_{j=1}^p\Gamma(l_j+D_j+1)} a^{l+D}.
\end{eqnarray}
For $l\in L$, put $B_l=\frac{\Gamma(-l_0-D_0+1)}{\prod_{j=1}^p\Gamma(l_j+D_j+1)}$, which takes value in ${1\over D_0}\mathbb C[[D_0,..,D_p]]$. Then $\sum_{l\in L} B_la^{l+D}/a_0$ is a constant multiple of the GKZ Gamma series with $(D_0,..,D_p)=(1+\gamma_0,\gamma_1,..,\gamma_p)$ (see section 3 \cite{Hosono-Lian}), and it is killed by the operators $\mathcal D_l$ of the GKZ system \cite{GKZ}. Note that $C(\mathcal T_\Sigma)=\cap_{I\in\mathcal T_\Sigma} K(I)^\vee$ \cite{GKZ}. By the usual product identity of Gamma function, it follows easily that $B_l$ is zero modulo $SR_\Sigma$ for $l\notin C(\mathcal T_\Sigma)^\vee$, and so $B_\Sigma(a)$ modulo $SR_\Sigma$ is killed by the $\mathcal D_l$. Let $J_\Sigma\subset\mathbb C[D_0,..,D_p]$ be ideal generated by $SR_\Sigma$ and the linear forms $\sum_i\bar\mu_i D_i$.

\begin{thm}
Suppose that $c_1(\mathbb P_\Sigma)$ is semi-positive and that $\mathcal T_\Sigma$ is regular. Then the deformed Gamma series $B_\Sigma(a)$ modulo $J_\Sigma$ gives analytic solutions to the $\beta=(-1,0,..,0)$ GKZ $\mathcal A$-hypergeometric system on some domain (i.e. near a large radius limit.) If $\mathbb P_\Sigma$ is nonsingular, then this gives a complete set of solutions.
\end{thm}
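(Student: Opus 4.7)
The plan is to verify the three conditions one-by-one, modeled on the argument in \cite{Hosono-Lian} for the closed string case, and then address the rank/completeness claim via a cohomology dimension count. First, working formally in $\mathbb{C}[[D_0,\dots,D_p]]\otimes(\text{Laurent series in $a$})$, I would show that the full Gamma series $\sum_{l\in L}B_l\, a^{l+D}/a_0$ is annihilated termwise by every box operator $\mathcal{D}_l$; this is the classical calculation of \cite{GKZ} using the functional equation $\Gamma(x+1)=x\Gamma(x)$. Next, to pass from the full $L$-sum to the restricted sum over $C(\mathcal{T}_\Sigma)^\vee$, I would invoke the observation already prepared in the paragraph preceding the theorem: whenever $l\notin C(\mathcal{T}_\Sigma)^\vee=\bigcup_{I\in \mathcal{T}_\Sigma}K(I)^\vee$, there exist indices $j_1,\dots,j_r$, no subset of which generates a cone of $\mathcal{T}_\Sigma$, such that $l_{j_1},\dots,l_{j_r}<0$; then a product of $D_{j_k}$'s occurs as a factor of $B_l$ (via poles of $1/\Gamma(l_j+D_j+1)$), and this product lies in $SR_\Sigma$. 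Hence the truncated series $B_\Sigma(a)$ differs from the full Gamma series by a term in $SR_\Sigma$, so $\mathcal{D}_l B_\Sigma(a)\equiv 0\pmod{J_\Sigma}$.

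For the first-order equations $\hat{\mathcal{L}}_k$, I would differentiate the Gamma series directly: the torus weight of the monomial $a^{l+D}$ with respect to $\sum_i\bar v^*_{i,k}\,a_i\partial_{a_i}$ is $\sum_i\bar v^*_{i,k}(l_i+D_i)=\sum_i\bar{\hat v}^*_{i,k}D_i$ since $l\in L$, and by the quasi-homogeneity incorporated in $\beta=(-1,0,\dots,0)$ only the $k=0$ component requires the shift. In either case the coefficient is one of the linear forms $\sum_i\bar\mu_i D_i$ (up to sign), which lies in $J_\Sigma$; thus $\hat{\mathcal{L}}_k B_\Sigma(a)\equiv 0\pmod{J_\Sigma}$. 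For convergence, regularity of $\mathcal{T}_\Sigma$ provides a strictly convex piecewise-linear function realizing $\mathcal{T}_\Sigma$, which defines a large radius limit point and a polyhedral neighborhood $U$ of it in $a$-space. On $U$ the ratios $|a^l|$ for $l\in C(\mathcal{T}_\Sigma)^\vee$ decay uniformly, and the Stirling estimate on $B_l$ controls the Gamma factors; these together give uniform absolute convergence on $U$ of each derivative $\partial_{D_{i_1}}\cdots\partial_{D_{i_r}}B_\Sigma(a)\big|_{D=0}$, yielding honest analytic functions.

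Finally, for completeness in the nonsingular case, I would use that when $\mathbb{P}_\Sigma$ is nonsingular the quotient ring $\mathbb{C}[D_0,\dots,D_p]/J_\Sigma$ is the classical cohomology ring $H^*(\mathbb{P}_\Sigma,\mathbb{C})$, whose dimension over $\mathbb{C}$ equals the number of top-dimensional simplices of $\mathcal{T}_\Sigma$. Since the components of $B_\Sigma(a)$ in any basis of $\mathbb{C}[D]/J_\Sigma$ have distinct leading behavior at the large radius limit (dictated by the $a^D$ factor and the lowest $D$-degree term in each component), they are linearly independent. A matching upper bound on the number of solutions comes from the known rank of the regular triangulation $\mathcal{A}$-hypergeometric system with $\beta=(-1,0,\dots,0)$, which equals the normalized volume of $(1,\nabla)$ with respect to $\mathcal{T}_\Sigma$, and this equals $\dim H^*(\mathbb{P}_\Sigma,\mathbb{C})$ under the nonsingularity hypothesis. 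The main obstacle I anticipate is the clean verification that the vanishing $B_l\equiv 0\pmod{SR_\Sigma}$ for $l\notin C(\mathcal{T}_\Sigma)^\vee$ produces exactly the right pole structure in the Gamma factors to match the square-free generators of $SR_\Sigma$, rather than just \emph{some} monomials in the $D_j$; this is where the semi-positivity hypothesis on $c_1(\mathbb{P}_\Sigma)$ is really used, via the preceding proposition identifying $\mathcal{T}_\Sigma$ with the canonical triangulation coming from the fan, so that non-faces of $\mathcal{T}_\Sigma$ agree with non-cones of $\Sigma$.
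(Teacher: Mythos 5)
Your outline is on the right track and largely parallels the paper (formal GKZ argument, vanishing of discarded terms modulo $SR_\Sigma$, convergence via a regular cone in the secondary fan, dimension count for completeness). But there are two substantive gaps.

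First, and most importantly, you never establish that $B_\Sigma(a)$ modulo $J_\Sigma$ is even well-defined. Each coefficient $B_l=\Gamma(1-l_0-D_0)/\prod_j\Gamma(1+l_j+D_j)$ lies a priori only in ${1\over D_0}\mathbb C[[D_0,\ldots,D_p]]$: when $l_0>0$, the numerator $\Gamma(1-l_0-D_0)$ has a simple pole at $D_0=0$. Reduction modulo the polynomial ideal $J_\Sigma$ does not make sense for such a term. The paper's proof begins precisely here: using the argument of Theorem~4.10 of \cite{HLY} (which requires the semi-positivity hypothesis), every $\mathcal T_\Sigma$-primitive relation has $l_0\leq 0$, and since by Proposition~4.8 of \cite{HLY} these generate the cone $C(\mathcal T_\Sigma)^\vee$, it follows that $l_0\leq 0$ for every retained index, so each retained $B_l$ lies honestly in $\mathbb C[[D_0,\ldots,D_p]]$. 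You locate the role of semi-positivity in making the non-faces of $\mathcal T_\Sigma$ match the Stanley--Reisner generators; that point is already handled by the GKZ identity $C(\mathcal T_\Sigma)=\bigcap_{I}K(I)^\vee$ and does not require semi-positivity. The genuinely semi-positivity-dependent step is the $l_0\leq 0$ control on the \emph{retained} terms, which you omit. Your phrase ``working formally in $\mathbb C[[D_0,\ldots,D_p]]\otimes(\text{Laurent series})$'' and the claim that ``the truncated series differs from the full Gamma series by a term in $SR_\Sigma$'' both implicitly assume this well-definedness.

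Second, your completeness argument invokes the classical isomorphism $\mathbb C[D]/J_\Sigma\cong H^*(\mathbb P_\Sigma,\mathbb C)$, but that statement is a theorem for \emph{complete} simplicial toric varieties, whereas the $\Sigma$ in the present setup is explicitly allowed to be incomplete (and in the application to enhanced polytopes it is necessarily incomplete, since $|\hat\Sigma|$ is a half-space). The paper therefore proves a new theorem immediately afterward, identifying the Chow ring $A_*(\mathbb P_\Sigma,\mathbb Q)$ with $\mathbb Q[D_1,\ldots,D_p]/J_\Sigma$ for incomplete simplicial fans admitting a strongly convex piecewise linear function, and showing its dimension equals the number of maximal cones; that number is then matched against the GKZ rank $\mathrm{vol}(\nabla)$. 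Citing the classical cohomology ring result does not substitute for this, so your dimension count has a gap that the paper fills with the subsequent theorem and its Order Lemma.
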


\proof
By the argument of Theorem 4.10 \cite{HLY} applied to $\Sigma$ again, we find that every $\mathcal T_\Sigma$-primitive relation $l$ has $l_0\leq 0$. Now, Proposition 4.8 \cite{HLY} shows that $C(\mathcal T_\Sigma)^\vee$ is generated by the $\mathcal T_\Sigma$-primitive relations. 
It follows that $B_l\in\mathbb C[[D_0,..,D_p]]$ for $l\in C(\mathcal T_\Sigma)^\vee$. Thus $B_\Sigma(a)$ modulo $J_\Sigma$ is well-defined. Since $\mathcal T_\Sigma$ is regular, $C(\mathcal T_\Sigma)$ is a maximal cone in the secondary fan of $\mathcal A$. If this cone is not regular, we can subdivide it to obtain a regular maximal cone in it. The dual of this regular cone contains $C(\mathcal T_\Sigma)$ and is generated by an integral basis $l^{(1)},..,l^{(p-n)}$ of $L$. The regular cone corresponds to an affine variety with coordinates $x_k=(-1)^{l_0^{(k)}} a^{l^{(k)}}$, and $B_\Sigma(a)$ becomes
$$
{1\over a_0}\sum_{m=(m_1,..,m_{p-n})\in\mathbb Z^{p-n}_{\geq0}} B_{\sum_k m_k l^{(k)}} x^m.
$$
By the usual product identity of Gamma function, it follows easily that this converges for small $x$.

Since expanding $B_\Sigma(a)$ as a linear combination of monomials in the $D_i$ (say, up to any degree $N$) obviously yields coefficients that are linearly independent, taking $B_\Sigma(a)$ modulo $J_\Sigma$ yields a series of the form
$\sum_i w_i(a)\alpha_i$ where the coefficients $w_i$ are also independent; here the $\alpha_i$ are any homogeneous basis of $\mathbb C[D_0,..,D_p]/J_\Sigma$. In other words, $B_\Sigma(a)$ modulo $J_\Sigma$ yields $dim~\mathbb C[D_0,..,D_p]/J_\Sigma$ independent solutions.  Now, for $\Sigma$ is regular the space of solutions has dimension $vol(\nabla)$, by a theorem of \cite{GKZ}. This is the number of maximal cones in $\Sigma$, which coincides with $dim~\mathbb C[D_0,..,D_p]/J_\Sigma$ by the next theorem.  $\Box$

\begin{rmk}
Given a fan $\Sigma$, it is easy to check in practice the semi-positivity of $c_1(\mathbb P_\Sigma)$ and regularity of $\mathcal T_\Sigma$. Neither condition implies the other in general. The semi-positivity condition holds iff every primitive relation (which is easy to compute) $l$ has $l_0\leq 0$. Since the primitive relations generates $C(\mathcal T_\Sigma)^\vee$, it is also easy to decide if this cone is strongly convex. Regularity of $\mathcal T_\Sigma$ is equivalent to $C(\mathcal T_\Sigma)^\vee$ being strongly convex.
\end{rmk}

Let $\Sigma$ be a possibly incomplete simplicial fan in $N=\mathbb Z^n$, such that every maximal cone is $n$ dimensional, that $|\Sigma|$ is convex, and that there exists a strongly convex continuous piecewise linear function $\psi:|\Sigma|\rightarrow\mathbb R$. The next theorem gives the chow ring of the toric variety $\mathbb P_\Sigma$ and generalizes a well-known result for complete simplicial toric varieties.

\begin{thm}
Under the assumptions above on $\Sigma$, the Chow ring $A_*(\mathbb P_\Sigma,\mathbb Q)$ is isomorphic $\mathbb Q[D_1,..,D_p]/J_\Sigma$, whose dimension is the number of maximal cones in $\Sigma$.
\end{thm}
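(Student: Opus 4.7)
The plan is to reduce the statement to the classical Danilov--Jurkiewicz presentation for complete simplicial toric varieties by enlarging $\Sigma$ to a complete simplicial fan and applying the excision sequence for Chow groups. First, I would extend $\Sigma$ to a complete simplicial fan $\tilde\Sigma\supset\Sigma$ in $\mathbb Z^n$ so that every additional ray $\nu_1,\dots,\nu_q$ of $\tilde\Sigma$ lies outside the convex cone $|\Sigma|$. This is possible: one covers $\mathbb R^n\setminus|\Sigma|^\circ$ by cones attached to $\Sigma$ along the boundary of $|\Sigma|$ and then refines simplicially; using the given strongly convex PL function on $|\Sigma|$ one can moreover arrange $\mathbb P_{\tilde\Sigma}$ to be projective.

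Next, I would invoke the classical Danilov--Jurkiewicz theorem for $\mathbb P_{\tilde\Sigma}$:
$$
A_*(\mathbb P_{\tilde\Sigma},\mathbb Q)\cong \mathbb Q[D_1,\dots,D_p,E_1,\dots,E_q]/J_{\tilde\Sigma},
$$
where $J_{\tilde\Sigma}$ is the sum of the Stanley--Reisner ideal and the linear relations of $\tilde\Sigma$. The inclusion $\mathbb P_\Sigma\hookrightarrow\mathbb P_{\tilde\Sigma}$ is open, with complement $Z=\bigcup_{\sigma\in\tilde\Sigma\setminus\Sigma}\overline{O(\sigma)}$, so the right exact sequence $A_*(Z)\to A_*(\mathbb P_{\tilde\Sigma})\to A_*(\mathbb P_\Sigma)\to 0$ presents $A_*(\mathbb P_\Sigma,\mathbb Q)$ as a quotient of the above ring. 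The key combinatorial step is to identify the image of $A_*(Z)$ with the ideal $(E_1,\dots,E_q)$. One direction is immediate since each $E_j$ is itself a class in the image. For the other, every $\sigma\in\tilde\Sigma\setminus\Sigma$ must contain at least one new ray $\nu_j$: if $\sigma$ were spanned only by old rays, convexity of $|\Sigma|$ gives $\sigma\subset|\Sigma|$, and the fan property forces $\sigma\in\Sigma$, a contradiction. Hence $[\overline{O(\sigma)}]$ is divisible by some $E_j$, so the image is contained in $(E_1,\dots,E_q)$.

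Setting $E_j=0$ in the presentation of $A_*(\mathbb P_{\tilde\Sigma},\mathbb Q)$ collapses $SR_{\tilde\Sigma}$ to $SR_\Sigma$ (by the same convexity argument, a subset of old rays spans a cone in $\tilde\Sigma$ iff it spans one in $\Sigma$) and collapses the linear relations of $\tilde\Sigma$, encoded by $\sum_i \bar\mu_i D_i+\sum_j\bar\nu_j E_j=0$, to the corresponding relations $\sum_i\bar\mu_i D_i=0$ of $\Sigma$. This yields exactly $\mathbb Q[D_1,\dots,D_p]/J_\Sigma$. For the dimension count, take a shelling of $\tilde\Sigma$ induced by the strictly convex PL function (which extends across $\tilde\Sigma$ by construction) and the standard monomial basis for $A_*(\mathbb P_{\tilde\Sigma},\mathbb Q)$ indexed by maximal cones of $\tilde\Sigma$; restricting to basis elements not involving any $E_j$ leaves precisely those indexed by maximal cones of $\Sigma$.

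The main obstacle is the extension in the first step. One must simultaneously ensure that $\tilde\Sigma$ is complete, simplicial, and projective, and that every new ray lies strictly outside $|\Sigma|$, which is a combinatorial refinement problem in $\mathbb R^n$ requiring some care. A secondary issue is bookkeeping: one must verify that the translation from the usual Chow-ring description (linear forms indexed by the dual lattice $M$) to the presentation $\sum_i\bar\mu_i D_i=0$ used here matches up correctly, in particular that eliminating $D_0$ via the first coordinate of these vector relations recovers the variables $D_1,\dots,D_p$ appearing in the statement.
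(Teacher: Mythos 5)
Your approach is genuinely different from the paper's: you try to reduce to the classical Danilov--Jurkiewicz theorem for a complete simplicial toric variety by extending $\Sigma$ to a complete fan $\tilde\Sigma$ and then using the excision sequence $A_*(Z)\to A_*(\mathbb P_{\tilde\Sigma})\to A_*(\mathbb P_\Sigma)\to 0$, whereas the paper works directly with the incomplete fan $\Sigma$, modifies Fulton's definition of the cones $\tau_i$ (to involve the \emph{free} walls of $\sigma_i$), and establishes an ``Order Lemma'' for $\Sigma$ that makes Fulton's filtration/moving-lemma argument go through unchanged. Your excision step and the identification of the image of $A_*(Z)$ with $(E_1,\dots,E_q)$ are sound (the image is an ideal by the projection formula, contains each $E_j$, and is contained in $(E_1,\dots,E_q)$ since every cone of $\tilde\Sigma\setminus\Sigma$ must use a new ray), and the claim that setting $E_j=0$ collapses $SR_{\tilde\Sigma}$ to $SR_\Sigma$ is correct once the new rays are strictly outside $|\Sigma|$.

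The genuine gap, which you flag but do not resolve, is the very first step: you need an extension $\tilde\Sigma\supset\Sigma$ that is simultaneously complete, simplicial, equipped with a strictly convex piecewise-linear function $\tilde\psi$ extending $\psi$, and whose new rays lie strictly outside $|\Sigma|$. None of these are automatic: completing a fan is possible, but arranging strict convexity of an extension $\tilde\psi$ simultaneously with the constraint that new rays avoid $|\Sigma|$ is a nontrivial convexity problem (the bending conditions of $\tilde\psi$ across walls between the new cones impose constraints that do not obviously hold for an arbitrary height on a single new ray, and more generally one must also handle the case where $|\Sigma|$ is not pointed). Moreover, even granting the extension, your dimension count requires the shelling order of $\tilde\Sigma$ (induced by $\tilde\psi$ and a generic $x_0$) to place all cones of $\Sigma$ before the cones of $\tilde\Sigma\setminus\Sigma$; this is a further constraint on $\tilde\psi$ that must be checked rather than assumed. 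The paper sidesteps all of this by proving the Order Lemma for $\Sigma$ itself (using the free walls of $\sigma_i$ in the definition of $\tau_i$ and the strict convexity of $\psi$ on $|\Sigma|$), which is exactly the extra ingredient needed to run Fulton's argument on an incomplete fan; that is arguably the content of the theorem, and your reduction relocates rather than removes the difficulty. If you can prove the extension lemma in the required strength, your proof would be complete, but as written it rests on that unproven step.
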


Our overall strategy follows \cite{Fulton}, but one key step requires a modification that borrows an idea in the proof of \cite{Danilov} which dealt with the case of complete toric varieties. Let's introduce some vocabulary. Let $\{\sigma_i\}_{i\in I}$ be the set of $n$-cones in $\Sigma$. Let $m_i\in N^\vee$ be such that $\psi|\sigma_i=m_i$. By the strong convexity assumption, the $m_i$ are pairwise distinct. So, we can find a point $x_0$ in the interior of $|\Sigma|$, so that the values $m_i(x_0)$ are pairwise distinct. We order the index set $I$ and identify it with
$\{1,2,..,|I|\}$ in this order. We say that a codimension one face of the cone $\sigma_i$ is a {\it shared wall} of $\sigma_i$, if it is of the form $\sigma_k\cap\sigma_i$, and a {\it free wall} if it is not. For each $i\in I$, let $\tau_i$ {\it be the intersection of all free walls of $\sigma_i$ and all shared walls $\sigma_k\cap\sigma_i$ with $k>i$.} The following lemma generalizes the key property (*) in section 5.2 \cite{Fulton}.

\begin{lem}(Order Lemma) If $\tau_i\subset\sigma_j$ then $j\geq i$.
\end{lem}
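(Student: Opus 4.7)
I would take the ordering by decreasing values $m_1(x_0)>\cdots>m_{|I|}(x_0)$ and prove the equivalent statement that $\sigma_i$ is the index-minimal cone of the star $\mathrm{St}(\tau_i):=\{\sigma_k\in\Sigma:\tau_i\subseteq\sigma_k\}$. Since $\tau_i\subseteq\sigma_j$ is the same as $\sigma_j\in\mathrm{St}(\tau_i)$, the Order Lemma follows immediately from this.

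The key step is a positivity calculation. Let $\rho_1,\ldots,\rho_n$ be the rays of $\sigma_i$; since $\sigma_i$ is simplicial, they form a basis of $N_{\mathbb R}$. Let $A_i\subseteq\{1,\ldots,n\}$ index those $l$ for which the wall $F_l$ of $\sigma_i$ opposite $\rho_l$ is either free or shared with some $\sigma_{k_l}$ with $k_l>i$, and let $B_i$ be its complement (so that $\tau_i$ is spanned by $\{\rho_l:l\in B_i\}$). Expanding $x_0=\sum_l\mu_l\rho_l$ in this basis, I claim $\mu_l>0$ for every $l\in A_i$. If $F_l$ is a shared wall with $\sigma_{k_l}$, strong convexity of $\psi$ across $F_l$ forces $m_i-m_{k_l}$ to vanish on $F_l$ and to be strictly positive on $\rho_l$, so it is a positive multiple of the dual basis element $m_l^*$ defined by $m_l^*(\rho_k)=\delta_{lk}$; the condition $k_l>i$ then gives $(m_i-m_{k_l})(x_0)>0$ by the ordering, hence $\mu_l>0$. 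If $F_l$ is free, then $F_l\subseteq\partial|\Sigma|$ (no maximal cone lies on the other side locally), and since $|\Sigma|$ is convex and $F_l$ is $(n-1)$-dimensional, the hyperplane $\{m_l^*=0\}$ is the unique supporting hyperplane of $|\Sigma|$ along $F_l$; thus $|\Sigma|\subseteq\{m_l^*\ge 0\}$, and $x_0\in\mathrm{int}\,|\Sigma|$ forces $\mu_l=m_l^*(x_0)>0$.

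Now fix $v=\sum_{l\in B_i} c_l\rho_l$ with all $c_l>0$ in the relative interior of $\tau_i$. The positivity just established makes every coordinate of $v+\epsilon x_0$ in the basis $\{\rho_l\}$ strictly positive for small $\epsilon>0$, so $v+\epsilon x_0\in\mathrm{int}(\sigma_i)$ and lies in no other maximal cone; hence $\psi(v+\epsilon x_0)=m_i(v+\epsilon x_0)$. For any $\sigma_k\in\mathrm{St}(\tau_i)$, the identity $m_k|_{\tau_i}=m_i|_{\tau_i}=\psi|_{\tau_i}$ gives $m_k(v)=m_i(v)$, so the global inequality $m_k\le\psi$ applied at $v+\epsilon x_0$ and simplified yields $m_k(x_0)\le m_i(x_0)$, strict for $k\ne i$ by distinctness of values. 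This forces $i=\min\{k:\sigma_k\in\mathrm{St}(\tau_i)\}$, completing the proof. The main obstacle is the free-wall case, where no adjacent cone is available for a direct strong-convexity argument; it must be handled via the convexity of $|\Sigma|$ and the fact that an $(n-1)$-dimensional boundary face of a convex set lies in a unique supporting hyperplane.
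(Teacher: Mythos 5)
Your proof is correct and takes essentially the same approach as the paper's. Where the paper passes to the star fan of $\tau_i$ and verifies $\bar x_0\in\bar\sigma_i$ by checking the edges of $\bar\sigma_i^\vee=\sigma_i^\vee\cap\tau_i^\perp$ (which correspond exactly to the free walls of $\sigma_i$ and the shared walls $\sigma_k\cap\sigma_i$ with $k>i$), you work with the equivalent perturbation $v+\epsilon x_0$ and coordinate analysis in the ray basis; you adopt the opposite sign convention for convexity of $\psi$ (and hence the reversed ordering by $m_i(x_0)$), which cancels out, and your explicit supporting-hyperplane argument in the free-wall case is a faithful spelling-out of the paper's terser remark that $m(x_0)\geq 0$ because $|\Sigma|$ is convex.
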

\proof
We begin with some basic facts. Let $i,k\in I$.

\item{(1)} $m_k-m_i\in\sigma_i^\vee$. In particular, $x_0\notin\sigma_i$ for $i>1$. This follows from the convexity of $\psi$ and that $m_1(x_0)<m_2(x_0)<\cdots$.

\item{(2)} Under the inclusion reversing correspondence between the faces of $\sigma_i$ and those of $\sigma_i^\vee$, a wall of $\sigma_i$ corresponds to an edge of $\sigma_i^\vee$ of the form $\mathbb R_+ m$ where $m\in N^\vee$ is an inward pointing normal to the wall. Note that we can pick $m=m_k-m_i$, if the wall is $\sigma_k\cap\sigma_i$.  Note also that $x\in\sigma_i$ iff $m(x)\geq0$ for every edge $\mathbb R_+m$ of $\sigma_i^\vee$.




\ \

Now given $\tau_i\subset\sigma_j$,  for some $j$. We consider the fan $star(\tau_i)$ in $N/\mathbb R \tau_i$, and note that $\psi-m_i$ induces a convex 
piecewise linear function on $star(\tau_i)$. We denote by $\bar \sigma_i, \bar \sigma_j$ the images of $\sigma_i,\sigma_j$ in $star(\tau_i)$, and 
$\bar x_0$ the image of $x_0$ in $N/\mathbb R \tau_i$. Then $\bar\sigma_i^\vee=\sigma_i^\vee\cap \tau_i^\bot$. Let $m$ be an edge of 
$\bar\sigma_i^\vee$. Then $m$ corresponds to free wall in $\sigma_i$ or a shared wall  $\sigma_k\cap \sigma_i$ with $k>i$. In the first case, we have
$m(x_0)\geq 0$ since $x_0$ lies in $|\Sigma|$ which is convex by assumption, and in the second case, $m(x_0)=m_k(x_0)-m_i(x_0)>0$ since $k>i$. It follows that 
$\bar x_0\in \bar \sigma_i$. Therefore $(m_j-m_i)(\bar x_0)\geq 0$ by the convexity of $\psi-m_i$, i.e., $m_j(x_0)\geq m_i(x_0)$. It follows that  $j\geq i$. $\Box$

\ \

\noindent{\it Proof of Theorem:}
\ \

\noindent \bl{Claim}: For each cone $\gamma$ in $\Sigma$, there's a unique $i=i(\gamma)$ such that $\tau_i\subset \gamma\subset \sigma_i$. And if $\gamma\subset \gamma^\prime$, then $i(\gamma)\leq i(\gamma^\prime)$. \\
Uniqueness follows from the Order  Lemma above. For the existence, let $i(\gamma)$ be the minimal such that $\gamma\subset \sigma_i$. If $\gamma=\sigma_i$, then we're done, otherwise, we write $\gamma$ as intersection of (n-1)-dim faces. Then it's easy to see that by the minimal property of $i(\gamma)$, we have $\tau_i\subset \gamma$. The claim is proved. \\
\\
It follows from the argument in section 5.2 \cite{Fulton} that 
$$
	A_*(P_{\Sigma})=H_{*}^{BM}(P_{\Sigma})=span_{\mathbb Q}\{[V(\tau_i)]_i\}
$$
where $P_\Sigma$ is the toric variety associated to $\Sigma$. Now we consider the surjection 
$$
	A/J_\Sigma\to A_*(P_{\Sigma}).
$$
where $A=\mathbb Q[D_1,..,D_p]$.
The ``algebraic moving lemma'' in section 5.2 \cite{Fulton} continues to work in this the incomplete case, and the proof there shows that $\{p(\tau_i)\}$ generates $A/J_\Sigma$ as $\mathbb Q$-module, where $p(\tau_i)$ is the monomial corresponds to the cone $\tau_i$. By comparing the dimensions, we get the isomorphism 
$$
A/J_\Sigma\simeq A_*(P_{\Sigma})
$$
In particular, $\dim_{\mathbb Q}A/J_\Sigma$ equals the number of maximal cones in $\Sigma$. $\Box$





\ \

We now apply our results to solve the enhanced GKZ system in open string mirror symmetry as a special case.
Let $\Sigma^*$ be a complete regular fan in $\mathbb R^4$, $\Sigma^*(1)$ be the set of generators of its 1-dimensional cones, and assume that $\Sigma^*(1)=\Delta^*$. Let $\hat\Delta^*$ be the enhanced polytope as before. For each cone $\sigma\in\Sigma^*$, we get a cone $\hat\sigma$ in $\mathbb R^5$ generated by $(\sigma,0)$ and the vector $w_1$. The set of cones $\hat\sigma$ obtained this way, together with 0, form a fan whose support is the half space $\mathbb R^4\times\mathbb R_\geq$. Since $w_0$ lies in this half space, there is a canonical way to subdivide the fan into a regular incomplete fan $\hat\Sigma$, so that $\hat\Sigma(1)=\hat\Delta^*$. We shall {\it assume that  $c_1(\mathbb P_{\hat\Sigma})$ is semi-positive and that $\mathcal T_\Sigma$ is regular.}  Note that this implies that $c_1(\mathbb P_{\Sigma^*})$ is also semi-positive. But the converse is not true. In fact, semi-positivity of $c_1(\mathbb P_{\hat\Sigma})$  put a strong constraint on the B-brane vector $q$. 

\begin{cor}
Suppose that $c_1(\mathbb P_{\hat\Sigma})$ is semi-positive and $\mathcal T_{\hat\Sigma}$ is regular. Then $B_{\hat\Sigma}(a)$ modulo $J_\Sigma$ gives a complete set of solutions to the enhanced GKZ system.
\end{cor}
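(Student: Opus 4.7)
The plan is to recognize that the enhanced GKZ system in the Proposition is \emph{exactly} a standard GKZ $\mathcal A$-hypergeometric system in the sense of the preceding Theorem, once we take $\mathcal A=\{\bar{\hat v}_i^*\}\cup\{\bar w_0,\bar w_1\}\subset(1,\mathbb Z^5)$ and identify the parameters $(a,b)=(a_i,b_0,b_1)$ with the natural coordinates on $\mathbb C^{\mathcal A}$. The lattice $\hat L$ defined earlier is precisely the lattice of linear relations among the points of $\mathcal A$, so the operators $\hat{\mathcal D}_{\hat l}$ are the standard GKZ box operators, while $\hat{\mathcal L}_k$ with $\hat\beta=(-1,0,\dots,0)$ are the standard homogeneity operators. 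Thus the enhanced GKZ system is the GKZ $\mathcal A$-hypergeometric system with $\beta=(-1,0,\dots,0)$.

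Next I would verify that the incomplete fan $\hat\Sigma$ satisfies the hypotheses of the Theorem, playing the role of $\Sigma$. By construction $\hat\Sigma(1)=\hat\Delta^*$, every maximal cone is $5$-dimensional, and the support $|\hat\Sigma|=\mathbb R^4\times\mathbb R_{\geq 0}$ is convex. The hypotheses of the Corollary give semi-positivity of $c_1(\mathbb P_{\hat\Sigma})$ and regularity of $\mathcal T_{\hat\Sigma}$. Moreover, since $\hat\Sigma$ was constructed as a \emph{regular} subdivision, $\mathbb P_{\hat\Sigma}$ is nonsingular. Applying the Theorem directly to $\hat\Sigma$ and $\mathcal A$, the deformed Gamma series $B_{\hat\Sigma}(a)$ (in the combined variables $(a,b)$) taken modulo $J_{\hat\Sigma}$ yields a complete set of analytic solutions on a domain near the large radius limit associated with $\mathcal T_{\hat\Sigma}$.

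Finally, it remains to check that all these solutions really satisfy the full enhanced system, not just the $\mathcal A$-hypergeometric equations \emph{as originally written}. The homogeneity operators $\hat{\mathcal L}_k$ are immediate from the $\bar{\hat v}_i^*$-degree of $a^{l+D}/a_0$. The operators $\hat{\mathcal D}_{\hat l}$ are killed by the un-truncated Gamma series for every $l\in\hat L$, as noted earlier, and the truncation to $C(\mathcal T_{\hat\Sigma})^\vee$ only discards terms that lie in $SR_{\hat\Sigma}\subset J_{\hat\Sigma}$, by the usual shift argument for $B_l$; so working modulo $J_{\hat\Sigma}$ preserves annihilation by $\hat{\mathcal D}_{\hat l}$. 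The count of solutions equals $\dim\mathbb C[D_0,\dots,D_{p}]/J_{\hat\Sigma}$, which by the Chow ring theorem equals the number of maximal cones in $\hat\Sigma$, which is $\mathrm{vol}(\hat\nabla)$ and hence the full dimension of the solution space by \cite{GKZ}.

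The main obstacle I anticipate is the bookkeeping step of matching the enhanced system to a genuine GKZ $\mathcal A$-system: one must check that the extra ``open-string'' homogeneity equation $(b_0\partial_{b_0}+b_1\partial_{b_1})\Pi=0$ coincides with the $\bar w_{k}$-component of $\hat{\mathcal L}_k$ for the appropriate $k$, and that the subdivision producing $\hat\Sigma$ really does produce the triangulation $\mathcal T_{\hat\Sigma}$ of $(1,\hat\nabla)$ whose primitive relations govern the convergence domain in $C(\mathcal T_{\hat\Sigma})^\vee$. Once these identifications are in place, the Corollary is a direct specialization of the preceding Theorem.
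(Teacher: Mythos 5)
Your proposal is correct and follows essentially the same route the paper intends: the corollary is simply the preceding Theorem specialized to the incomplete regular fan $\hat\Sigma$ and the point configuration $\mathcal A=\{\bar{\hat v}_i^*\}\cup\{\bar w_0,\bar w_1\}$, once one observes that the enhanced GKZ operators $\hat{\mathcal L}_k$, $\hat{\mathcal D}_{\hat l}$ are exactly the homogeneity and box operators of the GKZ $\mathcal A$-system with $\beta=(-1,0,\ldots,0)$, and that $\hat\Sigma$ is by construction a regular (hence nonsingular) incomplete simplicial fan with convex half-space support and $\hat\Sigma(1)=\hat\Delta^*$. The paper gives no further argument for this corollary, and your verification that the hypotheses of the Theorem are met (including the nonsingularity needed for completeness of the solution set, and the matching of the open-string homogeneity equation with $\hat{\mathcal L}_5$) is precisely the implicit content; your use of $J_{\hat\Sigma}$ also corrects what appears to be a typographical slip ($J_\Sigma$) in the corollary's statement.
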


\subsection*{Example: B-brane on Mirror Quintic}
Consider the one-parameter family of mirror quintic given in homogeneous coordinate by
\begin{eqnarray}
    \prod_{i=1}^5 z_i-x^{1/5}\sum_{i=1}^5 z_i^5=0
\end{eqnarray}
and one-parameter family of B-branes on it given by
\begin{eqnarray}
    z_5^4-\phi z_1z_2z_3z_4=0
\end{eqnarray}
Note that this divisor is invariant under the $(\mathbb Z_5)^3$
action \cite{Candelas}, hence descends to a hypersurface on the
mirror quintic. The integral points of the polytope $\Delta^*$ are
\begin{eqnarray*}
 \Delta^*: && v^*_0=(0,0,0,0)\\&& v^*_1=(1,0,0,0)\\ && v^*_2=(0,1,0,0)\\ &&
              v^*_3=(0,0,1,0)\\ && v^*_4=(0,0,0,1)\\ && v^*_5=(-1,-1,-1,-1).
\end{eqnarray*}
The toric coordinates $X_i$ are related to homogeneous coordinates $z_i$ by
\begin{eqnarray}
    X_i={z_i^5\over \prod_{i=1}^5 z_i},\ \ i=1,...,4.
\end{eqnarray}
Hence the B-brane divisor is given in toric coordinates by
$$
    X^{v_5^*-v_0^*}-\phi=0,
$$
which corresponds to the B-brane vector
$$
    q=(-1,0,0,0,0,1).
$$
The integral points in the enhanced polytope  are
\begin{eqnarray*}
 \hat \Delta^*: && \hat v^*_0=(0,0,0,0;0)\\
 && \hat v^*_1=(1,0,0,0;0)\\ && \hat v^*_2=(0,1,0,0;0)\\ &&
              \hat v^*_3=(0,0,1,0;0)\\ && \hat v^*_4=(0,0,0,1;0)\\ && \hat v^*_5=(-1,-1,-1,-1;0)\\
              && w_0=(0,0,0,0;1)\\
              && w_1=(-1,-1,-1,-1;1)
\end{eqnarray*}
The relation lattice is generated by
\begin{eqnarray}
    l^{(0)}&=&(-1,0,0,0,0,1,1,-1)\\
    l^{(1)}&=&(-5,1,1,1,1,1,0,0)
\end{eqnarray}
$x$ above is one of the moduli variable, and the relation between $\phi$ and $u$ can be read
$$
    u=x^{1/5} \phi
$$
The Picard-Fuchs equation from $l^{(0)}, l^{(1)}$ can be read
\begin{eqnarray*}
\left\{ \left(\theta_x+\theta_u\right)\theta_u-u\left((5\theta_x+\theta_u+1)\right)\theta_u\right\} \tilde\Pi(x,u)=0\\
\left\{\theta_x^4 (\theta_x+\theta_u)-x\prod_{i=1}^5(5\theta_x+\theta_u+i)\right\} \tilde \Pi(x,u)=0
\end{eqnarray*}
where $\theta_x=x{\pa\over \pa x}, \theta_u=u{\pa\over \pa u}$. This is equivalent to the ones in \cite{Mayr}. To get the complete equations from 
GKZ system, we need one more equation obtained from $l^{(1)}-l^{(0)}$ \cite{Mayr}. 

\subsubsection*{Large Radius Limit}
The following maximal triangulation $\mathcal T=\mathcal T_{\hat\Sigma}$ of $\hat \Delta^*$
corresponds to the large radius limit. Its maximal simplices correspond to the following maximal cones in $\hat\Sigma$
\begin{eqnarray*}
          <\hat v_0^*,\hat v_1^*,\hat v_2^*, \hat v_3^*, v_4^*, w_0>\\
          <\hat v_0^*,\hat v_1^*,\hat v_2^*, \hat v_3^*, v_5^*, w_1>\\
          <\hat v_0^*,\hat v_1^*,\hat v_2^*, \hat v_4^*, v_5^*, w_1>\\
          <\hat v_0^*,\hat v_1^*,\hat v_3^*, \hat v_4^*, v_5^*, w_1>\\
          <\hat v_0^*,\hat v_2^*,\hat v_3^*, \hat v_4^*, v_5^*, w_1>\\
          <\hat v_0^*,\hat v_1^*,\hat v_2^*, \hat v_3^*, w_0, w_1>\\
          <\hat v_0^*,\hat v_1^*,\hat v_2^*, \hat v_4^*, w_0, w_1>\\
          <\hat v_0^*,\hat v_1^*,\hat v_3^*, \hat v_4^*, w_0, w_1>\\
          <\hat v_0^*,\hat v_2^*,\hat v_3^*, \hat v_4^*, w_0, w_1>
\end{eqnarray*}
where generators of each cone is given in the bracket $<\cdots>$.
It follows  that the cone $C(\mathcal T)$ in the secondary fan
corresponding to $\mathcal T$ \cite{GKZ} \cite{Hosono-Lian} has the
dual cone
$$
    C(\mathcal T)^\vee=\mathbb Z_{\geq 0} (l^{(1)}-l^{(0)})+\mathbb Z_{\geq 0} l^{(0)}
$$
From this, it is easy to see that $c_1(\mathbb P_{\hat\Sigma})$ is semi-positive and that $ \mathcal T$ is regular.
Local coordinates are given by
$$
    z_1={a_1a_2a_3a_4b_1\over a_0^4 b_0}, \ \ \ z_2={a_5b_0\over a_0b_1}
$$
where $z_1=0, z_2=0$ is the large radius limit point.  We use
variables $D_0, D_1, \ldots, D_5$ to represent $\hat v_0^*,\hat v_1^*,\ldots, \hat v_5^*$ and $D_6, D_7$ to represent 
$w_0, w_1$. The primitive collections
gives the generators of the Stanley-Reisner ideal
$$
    D_1D_2D_3D_4D_7, D_5D_6, D_1D_2D_3D_4D_5
$$
There're only two independent $D_i$'s after imposing the linear relations as in the above discussion on deformed Gamma series. Put
$$
	E_1=D_1, E_2=D_5.
$$
Then the deformed Gamma series can be written as 
$$
	B_{\hat \Delta^*}(z_1,z_2)={1\over a_0}\sum_{m,n\geq 0}{ (-1)^m \Gamma(4m+n+4E_1+E_2+1)\sin{\pi(E_1-E_2)}\over
	\Gamma(m+E_1+1)^4\Gamma(n+E_2+1) \pi(m-n+E_1-E_2)
	} z_1^{m+E_1} z_2^{n+E_2}
$$
where we have used Gamma function identity: $\Gamma(1+z)\Gamma(1-z)={\pi z\over \sin{\pi z}}$. The generators of Stanley-Reisner ideal
can be written modulo the linear relations as $E_1^4(E_1-E_2), E_2(E_1-E_2), E_1^4 E_2$. Then $B_{\hat \Sigma}(z_1,z_2)$ satisfies the 
GKZ equations if it's viewed as taking values in the ring 
$$
	\mathbb{Q}[E_1, E_2]/(E_1^4(E_1-E_2), E_2(E_1-E_2), E_1^4 E_2)
$$
which has length $9$ equal to $vol(conv((1, \hat\Delta^*)))$. Therefore we have obtained the 9-dimensional solution space to the enhanced GKZ system.
 There is one regular
solution which coincides with the regular closed string period for the mirror
quintic at the large radius limit:
$$
    \omega_0(z_1,z_2)=\sum_{m\geq 0}{(-1)^m\ (5m)!\over (m!)^5}(z_1z_2)^m
$$
and two solutions with single log behavior
\begin{eqnarray*}
    \omega_1(z_1,z_2)&=&\omega_0(z_1,z_2) \ln z_1z_2 +5 \sum_{m\geq 0}{(-1)^m\ (5m)! \sum\limits_{j=m+1}^{5m} {1\over j}\over (m!)^5}(z_1z_2)^m\\
    \omega_2(z_1,z_2)&=&\omega_0(z_1,z_2) \ln z_2+\sum_{m\geq 0}{(-1)^m\ (5m)! \sum\limits_{j=m+1}^{5m} {1\over j}\over (m!)^5}(z_1z_2)^m
    -\sum_{m\geq 0, n\geq 0, m\neq n} {(-1)^{m}(4m+n)!\over (m!)^4 n! (m-n)}z_1^m z_2^n
\end{eqnarray*}
$\omega_1(z_1,z_2)$ corresponds to the closed period, and $\omega_2(z_1,z_2)$ now comes from the relative period. The so-called open-closed mirror map
can be similarly obtained as in the closed string case by normalizing
$$
    t_1=\omega_1(z_1,z_2)/\omega_0(z_1,z_2), t_2=\omega_2(z_1,z_2)/\omega_0(z_1,z_2)
$$

Not all of our solutions to the enhanced GKZ system come from relative periods. In closed string mirror symmetry, the periods of toric Calabi-Yau hypersurfaces in $\mathbb P^{(4)}_\Sigma$ near the large radius limit correspond to solutions with no more than $(log)^3$ behavior, while the full solution space to the corresponding GKZ system include functions with up to $(log)^4$. A key observation in \cite{Hosono-Lian} was that a natural way to get the periods from the deformed Gamma series is by multiplying it by the Calabi-Yau divisor $c_1(\mathbb P_\Sigma)$. This has the effect of killing off the $(log)^4$ terms in $B_\Sigma$. We expect that the same phenomenon happens for relative periods. Namely, they should correspond exactly to the solution $c_1(\mathbb P_{\hat\Sigma})B_{\hat\Sigma}$. Again, this does have the expected effect of killing off the $(log)^5$ terms in $B_{\hat\Sigma}$. 
In the example above this procedure yields 7 independent solutions as in \cite{Mayr} \cite{Masoud}. This will be studied in greater generality in a follow up paper.

\subsection*{Relative Periods And Abel-Jacobi Map}
Consider the relative period $\Pi(z,u)$ given by the family of pairs
$(X_z, Y_{z,u})$ as before. From the double residue formula
\begin{eqnarray*}
 \pa_u \Pi(z,u)&=&\int_{\pa{\Gamma_{z,u}}}Res_{Y_{z,u}}Res_{D_u}(\pa_u \log
  Q_u)
    \omega_z
\end{eqnarray*}
we see that if $u_0$ is some point where $\pa
{\Gamma_{z,u_0}}=C^+-C-$ is a pair of holomorphic curves, then
$$
    \pa_{u}\Pi(z,u)|_{z,u_0}=0
$$
since
the integrand is a form of type $(2,0)$.
Therefore the loci corresponding to Abel-Jacobi map lies in the
critical loci of relative period with respect to the deformation of
the divisor. This corresponds in physics the statement that the
critical point of the off-shell superpotential will give the D-brane
domain wall tension. This was carried out in details for the mirror quintic
example in \cite{Masoud} and \cite{Mayr}. Note that in their
example, the pair of curves $C^+, C^-$ lie inside the
transversal-intersection loci of $X_z\cap D_u$ except at two fixed
points. The double residue formula doesn't work in a
straight-forward way. But it can be seen that by blowing up twice at
those two points, the proper transformation of $X_z, Q_u$ will
intersect transversally at all points on the proper transformation
of $C^{\pm}$. The double residue formula can then be applied to the blown up configuration. The
Abel-Jacobi map will still be the critical value of some of the
relative periods.


  \appendix


S. Li, Department of Mathematics, Harvard University, Cambridge MA 02138. 

B. Lian, Department of Mathematics, Brandeis University, Waltham MA 02454. 

S.-T. Yau, Department of Mathematics, Harvard University, Cambridge MA 02138.

\end{document}